\newtheorem{theorem}{Theorem}
\newtheorem{corollary}[theorem]{Corollary}
\newtheorem{lemma}[theorem]{Lemma}
{\theoremstyle{remark}
\newtheorem{remark}{Remark}

}
\newcommand{\RR}{\mathbb{R}}
\newcommand{\NN}{\mathbb{N}}
\DeclareMathOperator{\trace}{trace}
\DeclareMathOperator*{\osc}{osc}
\DeclareMathOperator{\diam}{diam}
\newcommand{\tx}[1]{{\text{\rm #1}}}
\newcommand{\pd}[2]{\dfrac{\partial#1}{\partial#2}}
\newcommand{\pdd}[3]{\dfrac{\partial^2#1}{\partial#2 \partial#3}}
\numberwithin{equation}{section}
\title[Gradient estimates for parabolic equations]{Time-interior gradient estimates for quasilinear parabolic equations}
\author{Ben Andrews}
\thanks{Research partially supported by a Discovery Grant of the Australian Research Council}
\address{Centre for Mathematics and its Applications,  Australian National University, A.C.T. 0200, Australia}
\email{Ben.Andrews@maths.anu.edu.au}
\subjclass[2000]{35K55, 35B65}
\author{Julie Clutterbuck}
\address{Centre for Mathematics and its Applications,  Australian National University, A.C.T. 0200, Australia}
\email{Julie.Clutterbuck@maths.anu.edu.au}
\begin{document}
  \definecolor{draftgrey}{gray}{0.4}
  \begin{abstract}{
Bounded smooth solutions of the Dirichlet and Neumann problems for a wide variety of quasilinear parabolic equations, including graphical anisotropic mean curvature flows, have gradient  bounded in terms of oscillation and elapsed time.  } \end{abstract}

\maketitle

\section{Introduction}\label{sec:Introduction}

Our aim in this paper is to prove time-interior gradient estimates (more precisely, estimates on the gradient for positive times which do not depend on the initial gradient, but only on the oscillation of the initial data) for solutions of quasilinear parabolic equations with gradient-dependent coefficients, under the weakest possible assumptions on the coefficients.  We are motivated by geometrically natural equations such as the graphical mean curvature flow and its anisotropic analogues, which turn out to be borderline cases for such estimates.  

The graphical mean curvature flow is the following
degenerate quasilinear parabolic partial
differential equation for a function $u$:
\begin{equation}\label{eq:GMCF}
\frac{\partial u}{\partial t}=\sqrt{1+|Du|^2}
D_i\left(\frac{D_iu}{\sqrt{1+|Du|^2}}\right)
= \left(\delta_{ij}-\frac{D_iuD_ju}{1+|Du|^2}\right)\!D_iD_ju.
\end{equation}
The graph of a solution to this equation moves in such a way that the normal 
component of its velocity is equal to the mean curvature.  This is geometrically a very natural deformation, and it arises in particular as the steepest descent flow for the area functional.

Stationary solutions of the graphical mean curvature flow equation are minimal hypersurfaces:  The equation defining these has been studied in
great detail, and has a well-developed regularity theory.  Note that the stationary equation can be written in divergence form:
\begin{equation}\label{eq:MinSurf}
D_i\left(\frac{D_iu}{\sqrt{1+|Du|^2}}\right)=0.
\end{equation}

The regularity theory for the graphical mean curvature flow is more difficult than that for the minimal surface equation, partly because of its non-divergence form.  Ecker and Huisken \cite{eh:interior} 
proved estimates on the gradient of solutions which were interior in space but not time
--- the estimates depended on an initial gradient bound.
Fully interior bounds on all higher
derivatives (given the gradient bound) are also proved in \cite{eh:interior}.

Evans and Spruck \cite[Theorem 5.2]{es:gradest} proved interior gradient estimates for solutions of graphical mean curvature flow, by applying maximum principle arguments to quantities involving the gradient and the height.  Their result is not well known, and has been rediscovered at least once using essentially the same argument \cite{cm:gradest}. 

A more general evolution equation which is of considerable interest is the anisotropic mean curvature flow, which is analogous to the usual mean curvature flow but explicitly incorporates some anisotropic dependence on the normal direction.  Such flows arise naturally as gradient flows of area-like functionals, and appear in models of crystal growth \cite{cht} and other physical phenomena involving moving interfaces \cite{anggur1}, \cite{anggur2}.  We will follow the treatment given in \cite{andrews:anisotropic}.  In the graphical setting these equations have 
the following form:
\begin{equation}\label{eq:AMCF}
\frac{\partial u}{\partial t} = a^{ij}(Du)D_{i}D_{j}u
\end{equation}
with
\begin{equation}\label{eq:AMCFcoeff}
a^{ij}(p) = m(p)F(p)\frac{\partial^{2}F}{\partial p_{i}\partial p_{j}}.
\end{equation}
Here $F(p) = \bar F(p,-1)$, where $\bar F$ is a function on $(\RR^{n+1})^{*}$ which is
\begin{itemize}
\item positively homogeneous of degree one:\ $\bar F(\lambda z)=\lambda\bar F(z)$ for $\lambda>0$;
\item positive:  $\bar F(z)>0$ for each $z\in(\RR^{n+1})^{*}\backslash\{0\}$;
\item smooth on $(\RR^{n+1})^{*}\backslash\{0\}$;
\item strictly convex in non-radial directions:  For any $p\in(\RR^{n+1})^{*}\backslash\{0\}$ and any $v$ which is not a multiple of $p$, $D^{2}\bar F\big|_{p}(v,v)>0$. 
\end{itemize}
The function $m$ is called the mobility function, and is given by $m(p)=\bar m(p,-1)$ where $\bar m$ is a function on $(\RR^{n+1})^{*}\backslash\{0\}$ which is
\begin{itemize}
\item positively homogeneous of degree zero: $\bar m(\lambda z) = \bar m(z)$ for $\lambda>0$;
\item positive:  $\bar m(z)>0$ for each $z\in(\RR^{n+1})^{*}\backslash\{0\}$;
\item smooth on $(\RR^{n+1})^{*}\backslash\{0\}$.
\end{itemize}
 
The methods of \cite{es:gradest} and \cite{cm:gradest} do not extend easily to this more general setting.   The second author has recently shown \cite{julie:anisotropic} that the methods of these papers can be extended to give fully interior gradient estimates for anisotropic flows under additional assumptions on smallness and symmetry of the anisotropy.  Such estimates are not known without these restrictions, except in the one-dimensional case \cite[Theorem 11.18]{lieb:book}, \cite{NagaseTonegawa}, \cite{ac:gradest1D}.

The aim of this paper is to introduce a new method of proof which yields gradient estimates
for spatially periodic solutions of graphical mean curvature flow, and for Dirichlet and Neumann 
initial boundary value problems.  The estimates are not local in space, but they do apply to anisotropic mean curvature flows as well as the isotropic mean curvature flow.  The methods also give sharp gradient estimates for solutions of many other equations of interest including the $p$-Laplacian heat equations, and more generally quasilinear equations with coefficients depending on the gradient.

In our previous paper \cite{ac:gradest1D} we adapted a method of Kruzhkov 
\cite{kruzhkov:quasilinear} to prove estimates for parabolic
equations in one space variable.    The main result of that paper was that the 
modulus of continuity of a (periodic) solution of an equation of the form
\begin{equation}\label{eq:1Dflow}
\frac{\partial u}{\partial t} = \alpha(u')u''
\end{equation}
is a subsolution of the same equation.  More precisely, if $u$ is an $L$-periodic regular solution of 
\eqref{eq:1Dflow}
(by which we mean that the second spatial derivatives and first time derivatives are continuous), and
$\psi$ is a concave positive function on $(0,L/2)$ such that
\begin{equation}\label{eq:modofcont}
-2\psi\left(\frac{L+x-y}{2}\right)\leq u(y,0)-u(x,0) \leq 2\psi\left(\frac{y-x}{2}\right)
\end{equation}
for every $x,y$ with $0<y-x<L$ (we say $\psi$ is a \emph{modulus of continuity} for the initial data $u(.,0)$), then the (viscosity) solution of \eqref{eq:1Dflow} on $[0,L/2]$ with Dirichlet boundary condition and initial condition $\psi$ gives a modulus of continuity for $u$ at any positive time.  That is, there exists a unique minimal $\psi_+:\ [0,L/2]\times[0,\infty)$ which is continuous, non-negative, concave for each $t\geq 0$, with $\psi_+(x,0)=\psi(x)$ for each $x$, and is regular and satisfies \eqref{eq:1Dflow} on $(0,L/2)\times(0,\infty)$.  We prove that $\psi_+(.,t)$ is a modulus of continuity for $u(.,t)$ for every $t\geq 0$ (in the sense of \eqref{eq:modofcont}).  This estimate is sharp:  There is no smaller modulus of continuity which holds for arbitrary regular solutions with initial modulus of continuity $\psi$.
We also gave a simple necessary and sufficient condition on the coefficient $\alpha$ for the modulus of continuity $\psi_+$ to have bounded gradient for positive times for arbitrary bounded $\psi$:  This is true precisely when $\int_0^\infty s\alpha(s)\,ds$ and $\int_{-\infty}^0s\alpha(s)\,ds$ both diverge.   Thus this condition is also a necessary and sufficient condition for the existence of bounds of the form
$$
|u'(x,t)|\leq C(\|u\|_\infty,t)
$$
for arbitrary regular $L$-periodic solutions of \eqref{eq:1Dflow}.

The main work of the present paper is in the
non-trivial extension of the methods of \cite{ac:gradest1D} to higher-dimensions:  In section \ref{sec:mcf} we treat isotropic flows including the graphical
mean curvature flow, and obtain a necessary and sufficient criterion for time-interior gradient estimates to hold for such equations.  In section \ref{sec:general} the argument is extended to more general (non-isotropic) equations, including anisotropic mean curvature flows.  Boundary value problems are treated in Section \ref{sec:bvp}.
We conclude in Section \ref{sec:app} by giving some applications of the
estimates to solve initial and boundary-value problems with initial data which are merely continuous.

\section{Periodic mean curvature flow and other isotropic equations}\label{sec:mcf}

The argument of Kruzhkov proceeds by doubling the number of spatial variables, introducing a new function $w(x,y,t)=u(y,t)-u(x,t)$.  This is easily seen to satisfy a parabolic equation in two spatial variables, and a gradient estimate for $u$ can be deduced from a boundary gradient estimate for $w$ along the diagonal line $\{y=x\}$.  Such a boundary gradient
estimate can be obtained in a straightforward manner by constructing
a barrier.  
The extension of these methods to the
higher-dimensional mean curvature flow
seems implausible, since then we have a parabolic equation in $2n$
variables while the diagonal
$\{y^i=x^i,\ i=1,\dots, n\}$ is $n$-dimensional.   One does not normally expect to be
able to produce a barrier for such a boundary.  Nevertheless, we will show that the method works, at least for a wide class of equations where the coefficients depend only on the gradient.  The key is to make as much use as possible of the full $2n\times 2n$ matrix of second
derivatives, including the mixed derivatives $\frac{\partial^{2}}{\partial x^{i}\partial y^{j}}$.

We will illustrate this first in a particularly simple and natural case:  The graphical mean curvature flow
\begin{equation}
\dfrac{\partial u}{\partial t} = \left(\delta_{ij}-\frac{D_iu\,D_ju}{
1+|Du|^2}\right)D_iD_ju.\label{eq:mcf}
\end{equation}
We show the moduli of continuity of bounded solutions of \eqref{eq:mcf} are controlled by a particular solution $\varphi$ of the graphical curve-shortening flow:  
\begin{equation}\label{eq:csf}
\frac{\partial\varphi}{\partial t} = \frac{\varphi''}{1+(\varphi')^2}.
\end{equation}
This solution $\varphi$ is smooth on $[0,\infty)\times(0,\infty)$ with $\varphi(0,t)=0$ for $t>0$, and $\varphi(x,t)\to\frac12$ as $t\to 0$ for any $x>0$ or $x\to \infty$ for any $t>0$.  Note that for any $M>0$, the function $\phi(x,t)=M\varphi\left(\frac{x}{M},\frac{t}{M^2}\right)$ is again a solution of \eqref{eq:csf}.

Fix a linearly independent set $\Gamma=\{v_{1},\dots,v_{n}\}$ in $\RR^{n}$.  A function $f$ on $\RR^{n}$ is called $\Gamma$-periodic if for each $x\in\RR^{n}$ and $1\leq i\leq n$,  $f(x+v_{i})=f(x)$.

\begin{theorem}\label{Theorem 2}
Let $u: {\RR}^n\times [0,T)\to{\RR}$ be a smooth solution to equation \eqref{eq:mcf}
with an oscillation bound $|u(x,0)-u(y,0)|\leq M$, and with $u$ 
$\Gamma$-periodic.
Then for all $x$ and $y$ in ${\RR}^n$ and $t>0$,
$$
|u(y,t)-u(x,t)|\leq 2M\varphi\left(\frac{|y-x|}{2M},\frac{t}{M^2}\right).
$$
In particular $1+|Du(x,t)|^2\leq \exp\left(\frac{2M^2}{t}\right)$
for all $x\in\RR^n$ and all $t>0$.
\end{theorem}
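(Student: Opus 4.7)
The plan is a two-point maximum principle argument, extending the Kruzhkov doubling-of-variables technique of \cite{ac:gradest1D} to higher dimensions. Set $\phi(s,t):=M\varphi(s/M,t/M^2)$, which is again a solution of \eqref{eq:csf} with $\phi(0,t)=0$ for $t>0$ and $\phi(\cdot,0^+)\equiv M/2$ on $(0,\infty)$. Define
$$Z(x,y,t):=u(y,t)-u(x,t)-2\phi\!\left(\tfrac{|y-x|}{2},t\right)$$
on $\RR^n\times\RR^n\times[0,T)$; the theorem is exactly the assertion $Z\leq 0$. At $t=0$ the oscillation bound together with $\phi(\cdot,0^+)\equiv M/2$ give $Z\leq 0$ for $x\neq y$, while $Z=0$ on the diagonal. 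The $\Gamma$-periodicity of $u$ and the fact that $\phi\to M/2$ as $|y-x|\to\infty$ ensure that if $\sup Z>0$ on $[0,T']$ for some $T'<T$, the supremum is attained at an interior point with $x_0\neq y_0$. To get strict inequality I would work with $Z_\varepsilon:=Z-\varepsilon t$ and let $\varepsilon\to 0^+$ at the end.

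At a first interior maximum $(x_0,y_0,t_0)$ of $Z_\varepsilon$, write $r_0:=|y_0-x_0|/2>0$ and $\nu:=(y_0-x_0)/|y_0-x_0|$. Vanishing of the first-order derivatives in $x$ and $y$ forces
$$Du(x_0,t_0)=Du(y_0,t_0)=\phi_s(r_0,t_0)\,\nu=:p,$$
so the coefficient matrices $a^{ij}(Du(x_0))$ and $a^{ij}(Du(y_0))$ coincide and equal $a^{ij}(p)=\delta_{ij}-p_ip_j/(1+|p|^2)$. Pick an orthonormal basis $\{\nu,\tau_1,\dots,\tau_{n-1}\}$ of $\RR^n$; in this basis $a^{ij}(p)$ is diagonal with eigenvalue $(1+\phi_s^2)^{-1}$ on $\nu$ and eigenvalue $1$ on each $\tau_k$.

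The spatial Hessian of $2\phi(|y-x|/2,t)$ in $\RR^{2n}$ takes the block form $\tfrac12\begin{pmatrix}A&-A\\-A&A\end{pmatrix}$, where $A_{ij}=\phi_{ss}\,\nu_i\nu_j+\tfrac{\phi_s}{r_0}(\delta_{ij}-\nu_i\nu_j)$. The $2n\times 2n$ Hessian of $Z_\varepsilon$ is negative semidefinite at the maximum; testing it against the $2n$-vectors $(\xi,\xi)$ and $(\xi,-\xi)$ produces
\begin{align*}
\xi^T[D^2u(y_0)-D^2u(x_0)]\xi&\leq 0,\\
\xi^T[D^2u(y_0)-D^2u(x_0)]\xi&\leq 2\xi^T A\,\xi,
\end{align*}
for every $\xi\in\RR^n$. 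The key step is to match these two inequalities to the eigenstructure of $a^{ij}(p)$: choosing $\xi=\nu$ in the second inequality yields $\nu^T[D^2u(y_0)-D^2u(x_0)]\nu\leq 2\phi_{ss}$, while for each $\tau_k$ the first inequality is sharper than the second (since $\tau_k^T A\tau_k=\phi_s/r_0>0$) and gives $\tau_k^T[D^2u(y_0)-D^2u(x_0)]\tau_k\leq 0$. Expanding the trace in this eigenbasis,
$$a^{ij}(p)[D_iD_ju(y_0)-D_iD_ju(x_0)]\leq \frac{2\phi_{ss}}{1+\phi_s^2}+\sum_k 0=2\phi_t.$$

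Substituting into the evolution equation yields $\partial_t u(y_0)-\partial_t u(x_0)\leq 2\phi_t$, so $\partial_t Z_\varepsilon\leq -\varepsilon<0$ at $(x_0,y_0,t_0)$, contradicting $\partial_t Z_\varepsilon\geq 0$ at a first maximum. Sending $\varepsilon\to 0$ gives $Z\leq 0$, the claimed modulus of continuity. The pointwise gradient bound then follows by fixing a unit vector $\nu$ and sending $h\to 0^+$ in $u(x+h\nu,t)-u(x,t)\leq 2\phi(h/2,t)$, yielding $|Du(x,t)|\leq \phi_s(0,t)=\varphi'(0,t/M^2)$; the explicit form $1+\varphi'(0,\tau)^2\leq e^{2/\tau}$ is an independent one-dimensional fact about $\varphi$, obtained e.g.\ by applying a maximum-principle argument to $\log(1+(\varphi')^2)$ or by comparison with an explicit family of barriers. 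The main obstacle is the Hessian step above: a priori one fears the $n$-dimensional diagonal $\{y=x\}$ is too large to control with a barrier, but the miracle is that the mixed-derivative blocks produced by $(\xi,-\xi)$ test vectors contribute exactly the $2A$ on the right, and $a^{ij}(p)$ is "weak" precisely in the one direction $\nu$ where only the weak bound $2\phi_{ss}$ is available, so everything fits together with no residual error.
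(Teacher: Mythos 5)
Your proposal is correct and takes essentially the same route as the paper: the same doubled function $Z$ with the $\varepsilon$-perturbation, the same first-order conditions forcing $Du(x_0)=Du(y_0)=\phi'\nu$, and the same exploitation of the full $2n\times 2n$ Hessian including the mixed $x$--$y$ derivatives. Your test vectors $(\nu,-\nu)$ and $(\tau_k,\tau_k)$ are precisely the rank-one decomposition of the paper's augmented coefficient matrix $\left[\begin{smallmatrix} A & C \\ C^T & A \end{smallmatrix}\right]$ with $c^{11}=-\tfrac{1}{1+(\phi')^2}$ and $c^{ii}=1$ for $i>1$, so your eigenbasis computation reproduces the paper's trace inequality, and your treatment of the final explicit bound (deferring $1+\varphi'(0,\tau)^2\leq \mathrm{e}^{2/\tau}$ to a one-dimensional barrier argument) matches the paper's appeal to Theorem 7 and Corollary 8 of the one-dimensional paper.
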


\begin{proof}
The proof is similar to that of Theorem 1 in \cite{ac:gradest1D}:
Let $\varepsilon>0$, and define 
$$
Z(x,y,t)=u(y,t)-u(x,t)-2\phi\left(\frac{|y-x|}{2},t\right)-\varepsilon(1+t)
$$ 
on $\{y\neq x\}\times(0,\infty)$, where $\phi(\xi,t)=M\varphi(\xi/M,t/M^2)$.  $Z$ is strictly negative near the boundary $\{y=x\}$, and everywhere for small times.  Since $\varphi$ is increasing,  $Z$ is increased by choosing $y$ so that $|y-x|\leq |y+j-x|$ for all lattice shifts $j$.  Thus we need only consider $x$ and $y$ in a bounded region, so the maximum of $Z$ is attained for each $t$.  If $Z$ is not negative for all $(x,y,t)\in\RR^n\times\RR^n\times[0,\infty)$, then there exists $t_0>0$ and $y_0\neq x_0$ in $\RR^n$ such that $Z(x_0,y_0,t_0)=\sup\{Z(x,y,t):\ x,y\in\RR^n,\ 0\leq t\leq t_0\}=0$.
At this point we have the first order conditions:
\begin{equation}\label{eq:firstorder}\begin{aligned}
0&=\pd{Z }{x^i}=
-\pd{u(x,t)}{x^i}+{\phi'}\frac{y^i-x^i}{|y-x|}\\
0&=\pd{Z}{y^i}=
\pd{u(y,t)}{y^i}-{\phi'}\frac{y^i-x^i}{|y-x|}.
\end{aligned}\end{equation}

These become simpler if we choose coordinates such that
$e_1=\frac{y-x}{|y-x|}$:  Then 
\begin{equation} \label{first derivative 1}  \begin{split}
D_1u(x,t)&=\phi' \\
D_ju(x,t)&=0
\text{ for $j=2,\dots,n$ } \\
D_1u(y,t)&=\phi' \\
D_ju(y,t)&=0
\text{ for $j=2,\dots,n$. }
\end{split}\end{equation}

The second derivatives of $Z$ are as follows:
\begin{equation}
\begin{split}
\frac{\partial^2Z}{\partial x^i\partial x^j}
&\!=\!-
\pdd{u(x,t)}{x^i}{x^j}
\!-\!\frac{\phi''(y^i-x^i)(y^j-x^j)}{2|y-x|^2}
\!-\frac{\phi'}{|y-x|}\!\!\left(\!\delta_{ij}\!-\!\frac{(y^i-x^i)(y^j-x^j)}{|y-x|^2}
\!\right)\cr
\frac{\partial^2Z}{\partial y^i\partial y^j}
&\!=\!
\pdd{u(y,t)}{y^i}{y^j}
-\frac{\phi''(y^i-x^i)(y^j-x^j)}{2|y-x|^2}
-\frac{\phi'}{|y-x|}\!\!\left(\!\delta_{ij}\!-\!\frac{(y^i-x^i)(y^j-x^j)}{|y-x|^2}
\right)\cr
\frac{\partial^2Z}{\partial x^i\partial y^j}
&\!=\!\frac{\phi''(y^i-x^i)(y^j-x^j)}{2|y-x|^2}
+\frac{\phi'}{|y-x|}\left(\delta_{ij}-\frac{(y^i-x^i)(y^j-x^j)}{|y-x|^2}
\right).
\end{split}\notag\end{equation}
Choosing coordinates as before, {\allowdisplaybreaks this simplifies to give
\begin{align}\label{second derivatives of Z} \begin{split}
Z_{x^1x^1}&=-D_1D_1u(x,t)-\frac12\phi''\\
Z_{x^ix^i}&=-D_iD_iu(x,t)-\frac{\phi'}{|y-x|}\quad\text{\rm if $i>1$}\\
Z_{x^ix^j}&=-D_iD_ju(x,t)\quad\text{\rm if $i\neq j$}\\
Z_{y^1y^1}&=D_1D_1u(y,t)-\frac12\phi''\\
Z_{y^iy^i}&=D_iD_iu(y,t)-\frac{\phi'}{|y-x|}\quad\text{\rm if $i>1$}\\
Z_{y^iy^j}&=D_iD_ju(y,t)\quad\text{\rm if $i\neq j$}\\
Z_{x^1y^1}&=\frac12\phi''\\
Z_{x^iy^i}&=\frac{\phi'}{|y-x|}\quad\text{\rm if $i>1$}\\
Z_{x^ix^j}&=0\quad\text{\rm if $i\neq j$.} \end{split}
\end{align}
The $2n\times 2n$ matrix }of second derivatives of $Z$ is
negative semi-definite:
$$
0\geq \left[D^2Z\right]=
\bmatrix
Z_{x^1x^1}&\dots  &Z_{x^1x^n}&Z_{x^1y^1}&\dots &Z_{x^1y^n}\cr
\vdots    &\ddots &\vdots    &\vdots    &\ddots&\vdots\cr
Z_{x^nx^1}&\dots  &Z_{x^nx^n}&Z_{x^ny^1}&\dots &Z_{x^ny^n}\cr
Z_{y^1x^1}&\dots  &Z_{y^1x^n}&Z_{y^1y^1}&\dots &Z_{y^1y^n}\cr
\vdots    &\ddots &\vdots    &\vdots    &\ddots&\vdots\cr
Z_{y^nx^1}&\dots  &Z_{y^nx^n}&Z_{y^ny^1}&\dots &Z_{y^ny^n}\cr
\endbmatrix. 
$$

At the maximum point, the time derivative of $Z$ is given by
\begin{align*}
\frac{\partial Z}{\partial t} &= 
\!\left(\!\delta_{ij}\!-\!\frac{D_iu(y)D_ju(y)}{
1+|Du(y)|^2}\!\right)\!\!D_iD_ju(y,t)\\
&\quad\null
-\!\left(\!\delta_{ij}\!-\!\frac{D_iu(x)D_ju(x)}{
1+|Du(x)|^2}\!\right)\!\!D_iD_ju(x,t) -2\frac{\partial\phi}{\partial t}-\varepsilon\cr 
&<\!\left(\!\delta_{ij}\!-\!\frac{D_iu(y)D_ju(y)}{
1+|Du(y)|^2}\!\right)\!\!Z_{y^iy^j}
+\!\left(\!\delta_{ij}\!-\!\frac{D_iu(x)D_ju(x)}{
1+|Du(x)|^2}\!\right)\!\!Z_{x^ix^j}+2c^{ij}Z_{x^iy^j}\cr
&\quad\null+\frac{\phi''}{ 1+(\phi')^2}+2(n-1)\frac{\phi'}{|y-x|}
-{c^{11}}\phi''-2\sum_{i=2}^nc^{ii}\frac{\phi'}{|y-x|}-
2\frac{\partial\phi}{\partial t}.
\end{align*}
If the coefficient matrix of the second derivatives of $Z$ is
positive semi-definite,
so that
$$
0\leq 
\bmatrix
\frac{1}{ 1+(\phi')^2}&0&\dots&0&c^{11}&c^{12}&\dots&c^{1n}\cr
0                   &1&\dots&0&c^{21}&c^{22}&\dots&c^{2n}\cr
\vdots              &\vdots&\ddots&\vdots&\vdots&\vdots&\ddots&\vdots\cr
0                   &0&\dots &1&c^{n1}&c^{n2}&\dots&c^{nn}\cr
c^{11}&c^{21}&\dots&c^{n1}&\frac{1}{ 1+(\phi')^2}&0&\dots&0\cr
c^{12}&c^{22}&\dots&c^{n2}&0&1&\dots&0\cr
\vdots&\vdots&\ddots&\vdots&\vdots&\vdots&\ddots&\vdots\cr
c^{1n}&c^{2n}&\dots&c^{nn}&0&0&\dots&1\cr
\endbmatrix
$$
then the terms in the first line of the expression for $\partial
Z/\partial t$ are non-positive.  In particular, we can choose
\begin{align*}
c^{11}&=-\frac{1}{ 1+(\phi')^2}\cr
c^{ii}&=1\quad\text{\rm if $i>1$}\cr
c^{ij}&=0\quad\text{\rm if $i\neq j$}
\end{align*}
yielding the following inequality at the maximum point:
$$
\frac{\partial Z}{\partial t}< 2\left(\frac{\phi''}{ 1+(\phi')^2}
-\frac{\partial\phi}{\partial t}\right)=0.
$$
This contradiction proves that $Z(x,y,t)<0$ for all $x,y\in\RR^n$ and all $t\geq 0$.  Letting $\varepsilon\to 0$ we obtain the result of the Theorem.  The explicit estimate follows exactly as in Theorem 7 (and Corollary 8) of \cite{ac:gradest1D}.
\end{proof}

This result is sharp:  Initial data close to a square-wave function of one of the variables will give equality in the limit of lattices with large period.

The same argument as above gives sharp gradient estimates for arbitrary `isotropic' quasilinear equations, to give sharp control on the modulus of continuity of solutions at positive times in terms of their initial modulus of continuity.  In analogy with \eqref{eq:modofcont}, we say a positive concave function on $(0,\infty)$ is a \emph{modulus of continuity} for a function $v$ on $\RR^n$ if for all $y\neq x$ in $\RR^n$,
\begin{equation}\label{eq:modofcontRn}
|v(y)-v(x)|\leq 2\psi\left(\frac{|y-x|}{2}\right).
\end{equation}

\begin{theorem}\label{thm: nD isotropic}
Let $u: \RR^{n}\times [0,T)\to\RR$ be a regular solution to the equation
\begin{equation}\label{eq:isotropic}
\frac{\partial u}{\partial t} = \left(\alpha(|Du|,t)\frac{D_{i}uD_{j}u}{|Du|^{2}}
+\beta(|Du|,t)\left(\delta_{ij}-\frac{D_{i}uD_{j}u}{|Du|^{2}}\right)\right)D_{i}D_{j}u
\end{equation}
where $\alpha$ and $\beta$ are non-negative functions on $[0,\infty)$, and $u$ is $\Gamma$-periodic. Suppose $\psi$ is a modulus of continuity for $u(.,0)$.  Let $\varphi$ be non-negative, increasing in $x$, regular on $[0,\infty)\times[0,\infty)$, and satisfy 
\begin{equation}\label{eq:1Dreduction}
\frac{\partial \varphi}{\partial t} \geq \alpha(\varphi',t)\varphi''
\end{equation}
with $\varphi(z,0)\geq \psi(z)$ for $z>0$.  Then for all $t>0$,
$\varphi(.,t)$ is a modulus of continuity for $u(.,t)$.
\end{theorem}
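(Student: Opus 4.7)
The plan is to adapt the doubling-of-variables contradiction argument of Theorem \ref{Theorem 2} to the more general isotropic operator. For $\varepsilon>0$, define
\[
Z(x,y,t) = u(y,t) - u(x,t) - 2\varphi\!\left(\tfrac{|y-x|}{2},t\right) - \varepsilon(1+t)
\]
on $\{y\neq x\}\times[0,\infty)$. The hypothesis $\psi\leq\varphi(\cdot,0)$ makes $Z(\cdot,\cdot,0)\leq -\varepsilon$, smoothness of $u$ keeps $Z$ negative near the diagonal $\{y=x\}$, and $\Gamma$-periodicity together with monotonicity of $\varphi$ in its first argument restricts the supremum to a bounded set of $(x,y)$ for each $t$. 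Assume for contradiction that $Z$ first vanishes at some interior point $(x_0,y_0,t_0)$ with $x_0\neq y_0$ and $t_0>0$.

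Rotate coordinates so that $e_1=(y_0-x_0)/|y_0-x_0|$. The first-order conditions at $(x_0,y_0,t_0)$ force
\[
D_1 u(x_0,t_0)=D_1 u(y_0,t_0)=\varphi',\qquad D_j u(x_0,t_0)=D_j u(y_0,t_0)=0 \text{ for } j>1,
\]
so $|Du|=\varphi'$ at both points, and the coefficient matrix in \eqref{eq:isotropic} reduces to the same diagonal matrix $A = \operatorname{diag}(\alpha(\varphi',t_0),\beta(\varphi',t_0),\dots,\beta(\varphi',t_0))$ at $x_0$ and $y_0$. The entries of the $2n\times 2n$ Hessian of $Z$ take exactly the form in \eqref{second derivatives of Z} with $\phi$ replaced by $\varphi$, and this Hessian is negative semi-definite.

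Test negative semi-definiteness against the block matrix
\[
M = \begin{pmatrix} A & C \\ C & A \end{pmatrix}, \qquad C = \operatorname{diag}(-\alpha(\varphi',t_0),\beta(\varphi',t_0),\dots,\beta(\varphi',t_0)),
\]
which splits over the coordinate pairs $(x^i,y^i)$ into $n$ rank-one $2\times 2$ blocks and hence is positive semi-definite. Computing $\operatorname{tr}(M\,D^2 Z)$ and using $a^{ij}(Du)D_iD_j u=\partial_t u$ at each of $x_0,y_0$, the tangential terms involving $\varphi'/|y-x|$ cancel between the diagonal and off-diagonal blocks (because $c^{ii}=\beta$ for $i>1$), while the radial $\varphi''$ contributions combine to $-2\alpha(\varphi',t_0)\varphi''$. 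Thus
\[
0 \geq \operatorname{tr}(M\,D^2Z) = \partial_t u(y_0,t_0) - \partial_t u(x_0,t_0) - 2\alpha(\varphi',t_0)\varphi''.
\]
Inserting this into $\partial_t Z = \partial_t u(y_0,t_0)-\partial_t u(x_0,t_0)-2\partial_t\varphi-\varepsilon$ and invoking the subsolution hypothesis \eqref{eq:1Dreduction} gives
\[
\partial_t Z \leq 2\alpha(\varphi',t_0)\varphi'' - 2\partial_t\varphi - \varepsilon \leq -\varepsilon<0,
\]
contradicting the fact that $\partial_t Z\geq 0$ at a first interior zero. Sending $\varepsilon\to 0$ yields the theorem.

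The hard part is selecting the test matrix $M$: its off-diagonal block must carry the opposite sign to $A$ in the radial direction (to accommodate $Z_{x^1y^1}=\tfrac{1}{2}\varphi''$) but the \emph{same} sign in the tangential directions, so as to cancel the $\varphi'/|y-x|$ terms that appear only when $n>1$. The magnitudes $\alpha$ in the radial slot and $\beta$ in the tangential slots are forced by the need for the $u$-second-derivatives to reassemble into $\partial_t u(y_0)-\partial_t u(x_0)$. Once $M$ is correctly identified, verifying $M\geq 0$ reduces to the $n$ independent rank-one $2\times 2$ blocks and the remainder is an accounting exercise.
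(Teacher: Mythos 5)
Your proof is correct and follows essentially the same approach as the paper: the paper establishes the argument in detail for the mean curvature flow (Theorem \ref{Theorem 2}) and the general quasilinear case (Theorem \ref{periodic higherdimensional theorem}), and in the isotropic case the paper's choice $c^{ij}=a^{ij}$ for $(i,j)\neq(1,1)$, $c^{11}=a^{11}-2\alpha$ reduces exactly to your $C=\operatorname{diag}(-\alpha,\beta,\dots,\beta)$, with the same cancellation of the $\varphi'/|y-x|$ terms and the same rank-one block verification of positive semi-definiteness. (One terminological quibble: $\varphi_t\geq\alpha\varphi''$ makes $\varphi$ a \emph{super}solution of the one-dimensional reduction, not a subsolution.)
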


Note that $\varphi$ need only be defined on $[0,L]\times[0,\infty)$, for any $L$ greater than the diameter of a unit cell of the lattice.

Particular cases of interest include the $p$-Laplacian heat flows
$$
\frac{\partial u}{\partial t} = D_{i}\left(|Du|^{p-2}D_{i}u\right)
$$
for $p>1$.  In these examples $\alpha(q)= (p-1)|q|^{p-2}$, so the one-dimensional equation \eqref{eq:1Dreduction} is simply the one-dimensional $p$-Laplacian heat flow, and a useful solution $\varphi$ can be constructed explicitly:    Define
\begin{equation*}
F_{p}(\xi) = \left\{\begin{aligned}
&\int_{0}^{\xi}\left(1-s^{2}\right)_{+}^\frac{1}{ p-2}\,ds,&p>2;\\
&\int_{0}^{\xi}\exp\left(-s^{2}\right)\,ds,& p=2;\\
&\int_{0}^{\xi}\left(1+s^{2}\right)^{-\frac{1}{ 2-p}}\,ds,& 1<p<2.
\end{aligned}\right.
\end{equation*}
Then set $F_{p}(\infty):=\lim_{\xi\to\infty}F_{p}(\xi)$,
and define
\begin{equation*}
R_{p}=\left\{\begin{aligned}
&\left(\frac{2p(p-1)}{ p-2}\right)^\frac{1}{ p}\left(2F_{p}(\infty)\right)^{-\frac{p-2}{ p}},&p>2;\\
&2,& p=2;\\
&\left(\frac{2-p}{ 2p(p-1)}\right)^{-\frac{1}{ p}}\left(2F_{p}(\infty)\right)^{\frac{2-p}{ p}},& 1<p<2;\\
\end{aligned}\right.
\end{equation*}
The required solutions are then
\begin{equation*}
\varphi(z,t) =
\frac{1}{ 2F_{p}(\infty)}F_{p}\left(\frac{z}{ t^{\frac{1}{ p}}R_{p}}\right).
\end{equation*}

This implies the sharp gradient bounds for periodic solutions of the $p$-Laplacian heat equation in any dimension:
$$
|Du(x,t)|\leq \frac{1}{ 2R_{p}F_{p}(\infty)} M^{\frac{2}{ p}}t^{-\frac{1}{ p}},
$$
where $M=\osc u=\sup u-\inf u$.
Note that for $p>2$ weak solutions must be used.  However it is known \cite{dibenedetto}, \cite{chendiben1}, \cite{chendiben2} that these solutions are $C^{1,\alpha}$, and hence smooth away from points where the gradient vanishes.  This is sufficient for the above proof to apply.  

\section{The general periodic case}\label{sec:general}

In this section we show that the methods of Section \ref{sec:mcf} can be extended to a wide range of singular or degenerate quasilinear parabolic equations.    The main requirement for application of the method is sufficient control on the degeneracy of the equations as the gradient becomes large.  In particular, it is sufficient to require that the smallest eigenvalue of the coefficient matrix $a^{ij}$ is at least comparable to $1/|Du|^{2}$ as $|Du|$ becomes large.  This condition is satisfied by any anisotropic mean curvature flow of the form given in Equation \eqref{eq:AMCF}--\eqref{eq:AMCFcoeff}  (see Lemma \ref{lem:amcf} below).
Consider the
 evolution equation 
\begin{equation}
u_t=a^{ij}(Du,t)D_{i}D_{j}u+b(Du,t)\label{evolution equation}, 
\end{equation} 
where $A(p,t)=[a^{ij}(p,t)]$ is positive semi-definite.

Suppose that there exists a continuous $\alpha:\RR^+\times[0,T]\rightarrow\RR_+$ with
\begin{equation}
0<\alpha(R,t)\le R^2 \inf_{|p|=R, (v\cdot p)\not= 0}
\frac{ v^T A(p,t)v}{(v\cdot p)^2}.\label{defn of alpha}
\end{equation}  
Let $\psi:\ [0,\infty)\to\RR$ be non-negative and concave,
and suppose that $\varphi:\ [0,\infty)\times[0,\infty)\to\RR_+$ is regular, non-decreasing and non-negative and satisfies 
\begin{equation} \label{conditions and equation for varphi}
\varphi_t\geq \alpha(|\varphi'|,t)\varphi'' \end{equation}
with $\varphi(z,0)\geq\psi(z)$ for $z>0$.
The result is as follows:

\begin{theorem}
\label{periodic higherdimensional theorem}
Let $u$ be a regular $\Gamma$-periodic solution to \eqref{evolution equation}.  If $\psi$ is a modulus of continuity for $u(.,0)$ then $\varphi(..,t)$ is a modulus of continuity for $u(.,t)$, for every $t>0$.
\end{theorem}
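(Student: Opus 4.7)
The plan is to adapt the doubling-of-variables technique of Theorem \ref{Theorem 2} to the general setting, with the isotropic matrix there replaced by $A=[a^{ij}(p,t)]$ and the key algebraic step relying directly on the degeneracy bound \eqref{defn of alpha}. For $\varepsilon>0$, set
\[
Z(x,y,t)=u(y,t)-u(x,t)-2\varphi(|y-x|/2,t)-\varepsilon(1+t)
\]
on $\{y\neq x\}\times[0,\infty)$. The hypothesis $\varphi(\cdot,0)\geq\psi$ together with $\psi$ being a modulus of continuity for $u(\cdot,0)$ gives $Z(\cdot,\cdot,0)\leq-\varepsilon$, and non-negativity of $\varphi$ gives $Z<0$ near the diagonal $\{y=x\}$. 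Monotonicity of $\varphi$ in its first argument together with $\Gamma$-periodicity of $u$ confines any candidate supremum to a compact set in $(x,y)$ for each $t$. I argue by contradiction: suppose there is a first time $t_0>0$ and a point $(x_0,y_0)$ with $y_0\neq x_0$ at which $Z=0$.

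Choose coordinates so that $e_1=(y_0-x_0)/|y_0-x_0|$ and set $r=|y_0-x_0|/2$. The first-order conditions $\nabla_{x}Z=\nabla_{y}Z=0$ yield $Du(x_0,t_0)=Du(y_0,t_0)=\varphi'(r,t_0)\,e_1=:p$; in particular the lower-order term $b(Du,t)$ cancels out of $\partial_{t}Z$, leaving
\[
\partial_{t}Z\big|_{(x_0,y_0,t_0)}=a^{ij}(p,t_0)\bigl[D_iD_j u(y_0,t_0)-D_iD_j u(x_0,t_0)\bigr]-2\varphi_t-\varepsilon\geq 0.
\]
The $2n\times 2n$ matrix of spatial second derivatives of $Z$ has the block form
\[
D^2Z=\begin{pmatrix} -D^2 u(x_0)-H & H \\ H & D^2 u(y_0)-H \end{pmatrix}\leq 0,
\]
where $H$ is diagonal in the chosen basis with $H_{11}=\varphi''(r,t_0)/2$ and $H_{ii}=\varphi'(r,t_0)/|y_0-x_0|$ for $i>1$. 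For any symmetric PSD matrix $B=\left(\begin{smallmatrix}A&Q\\Q&A\end{smallmatrix}\right)$, the inequality $\mathrm{tr}(BD^{2}Z)\leq 0$ rearranges to
\[
a^{ij}\bigl[D_iD_j u(y_0)-D_iD_j u(x_0)\bigr]\leq 2\,\mathrm{tr}\bigl((A-Q)H\bigr).
\]

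The heart of the proof is choosing $Q$ so that $B\geq 0$ and the right-hand side collapses to the one-dimensional quantity $2\alpha(|p|,t_0)\varphi''(r,t_0)$. I propose
\[
Q=A-2\alpha(|p|,t_0)\,e_1 e_1^{T},
\]
for which $A-Q=2\alpha e_1 e_1^{T}\geq 0$ and $\mathrm{tr}((A-Q)H)=\alpha\varphi''$. The remaining positivity $A+Q=2A-2\alpha e_1e_1^{T}\geq 0$ is exactly the content of \eqref{defn of alpha}: for $v$ with $v^{1}\neq 0$ the identities $v\cdot p=\varphi' v^{1}$ and $|p|=|\varphi'|$ give $v^{T}Av\geq\alpha(|p|,t_0)(v\cdot p)^{2}/|p|^{2}=\alpha(|p|,t_0)(v^{1})^{2}$, while the case $v^{1}=0$ follows from $A\geq 0$. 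Substituting back and invoking \eqref{conditions and equation for varphi} yields $\partial_{t}Z\leq 2\alpha\varphi''-2\varphi_{t}-\varepsilon\leq -\varepsilon$, contradicting $\partial_{t}Z\geq 0$. Letting $\varepsilon\to 0$ and swapping the roles of $x$ and $y$ gives the two-sided bound $|u(y,t)-u(x,t)|\leq 2\varphi(|y-x|/2,t)$, as claimed. The main difficulty is precisely the choice of $Q$: naively, the transversal entries $H_{ii}=\varphi'/|y_0-x_0|$ produce contributions of indefinite sign in the upper bound which cannot be absorbed into \eqref{conditions and equation for varphi}. The trick is to let $Q$ cancel those transversal contributions exactly and reduce to a purely radial $\varphi''$ term; condition \eqref{defn of alpha} is tailored so that this cancellation is consistent with $B\geq 0$. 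A minor technicality at points where $\varphi'(r,t_0)=0$ can be bypassed by applying the argument to $\varphi+\delta r$ and then sending $\delta\to 0$.
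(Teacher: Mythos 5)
Your proof is correct and follows essentially the same route as the paper: the choice $Q = A - 2\alpha\,e_1e_1^T$ is exactly the paper's choice of $C$ (where $c^{ij}=a^{ij}$ for $(i,j)\neq(1,1)$ and $c^{11}=a^{11}-2\alpha$), and the positive-semidefiniteness check $A+Q\geq 0$, $A-Q\geq 0$ matches the paper's verification that the doubled matrix is PSD using \eqref{defn of alpha}. Your presentation of the cancellation via $\mathrm{tr}((A-Q)H)$ is a slightly cleaner packaging of the same computation.
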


\begin{corollary} \label{periodic higherdimensional corollary}
If there are positive constants $A_0$ and $P$ so that 
\begin{equation} 
\alpha(|p|,t)|p|^2\ge A_0 \text{ for $|p|\ge P $ and all }t\label{degeneracy for alpha}, 
\end{equation}
 then any regular $\Gamma$-periodic solution $u$ with $\osc u\leq M$ satisfies
\begin{equation*}
|Du(x,t)|\le P\exp\left(1+\frac{M^2}{A_0t}\right)
\end{equation*}
for all $x\in\RR^n$ and $t>0$.\end{corollary}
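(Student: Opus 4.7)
The strategy is to apply Theorem~\ref{periodic higherdimensional theorem} with the trivial concave modulus of continuity $\psi(z)\equiv M/2$, which is valid because $\osc u\le M$. The corollary will follow once I exhibit a $\varphi$ meeting the theorem's hypotheses with $\varphi(0,t)=0$, $\varphi(z,0^{+})\ge M/2$ for $z>0$, and $\varphi'(0^{+},t)\le P\exp(1+M^{2}/(A_{0}t))$, for then letting $y\to x$ along $Du(x,t)$ in the conclusion $|u(y,t)-u(x,t)|\le 2\varphi(|y-x|/2,t)$ gives the stated bound.

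A key reduction comes from the degeneracy hypothesis~\eqref{degeneracy for alpha}: wherever $\varphi'(z,t)\ge P$, combining $\alpha(\varphi',t)\ge A_{0}/(\varphi')^{2}$ with $\varphi''\le 0$ gives $\alpha(\varphi',t)\varphi''\le A_{0}\varphi''/(\varphi')^{2}$, so it suffices there to verify $\varphi_{t}\ge A_{0}\varphi''/(\varphi')^{2}$; wherever $\varphi'<P$ I will arrange $\varphi_{t}=\varphi''=0$ identically, making the inequality automatic. Setting $\sigma(t)=1+M^{2}/(A_{0}t)$, I propose
\[
\varphi(z,t)=\begin{cases} c(t)\,\log\!\bigl(1+z/b(t)\bigr), & 0\le z\le z_{0}(t),\\ M/2, & z\ge z_{0}(t), \end{cases}
\]
with $c(t)=M/(2\sigma(t))$, $b(t)=(c/P)e^{-\sigma(t)}$ and $z_{0}(t)=(c/P)(1-e^{-\sigma(t)})$. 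These choices match the two pieces continuously at $z_{0}$, make the slope of the log piece decrease to exactly $P$ there, and give $\varphi'(0^{+},t)=c/b=Pe^{\sigma(t)}$. In the log region the identity $\varphi''/(\varphi')^{2}\equiv -1/c$ collapses the reduced supersolution inequality (after parametrising by $\eta=z/b$) to a single-variable problem; using $c'/c=M^{2}/(A_{0}t^{2}\sigma)$ and $b'/b=M^{2}(1+\sigma)/(A_{0}t^{2}\sigma)$, the minimum over $\eta\in[0,e^{\sigma}-1]$ occurs at the interior critical point $\eta^{*}=\sigma$ and reduces, after simplification, to the elementary inequality $(\sigma-1)^{2}(\sigma-\log(1+\sigma))\le 4\sigma^{3}$ for $\sigma>1$, which is immediate from $\sigma-\log(1+\sigma)\le\sigma$ and $(\sigma-1)^{2}\le 4\sigma^{2}$. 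The initial condition $\varphi(z,0^{+})\ge M/2$ for $z>0$ holds because $c(t),z_{0}(t)\to 0$ as $t\to 0$, so any fixed $z>0$ lies in the constant region $\varphi=M/2$ for all small $t$.

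The main technical obstacle is the kink of $\varphi$ at $z=z_{0}(t)$, where $\varphi'$ drops from $P$ to $0$ and the theorem's regularity hypothesis fails strictly. I would handle this by smoothing $\varphi$ across a thin $\varepsilon$-neighbourhood of $z_{0}$, noting that the (negative) distributional contribution of $\varphi''$ at the kink only \emph{strengthens} the supersolution inequality, so that the smoothed $\varphi_{\varepsilon}$ still satisfies $\varphi_{\varepsilon,t}\ge\alpha(|\varphi_{\varepsilon}'|,t)\varphi_{\varepsilon}''$ in the classical sense; applying Theorem~\ref{periodic higherdimensional theorem} to $\varphi_{\varepsilon}$ and passing to the limit $\varepsilon\to 0$ in the resulting modulus of continuity then yields the bound.
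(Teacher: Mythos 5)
Your explicit barrier is essentially sound, and it is a genuinely different route from the paper's: the paper deduces the corollary from Theorem~\ref{periodic higherdimensional theorem} by using translating supersolutions of the reduced one-dimensional equation as barriers (quoting Theorem 7 of the one-dimensional paper), whereas you build a self-contained log-profile supersolution. Your calculus checks out: on the log piece $\varphi''/(\varphi')^2\equiv-1/c$, $c'/c=M^2/(A_0t^2\sigma)$, $b'/b=M^2(1+\sigma)/(A_0t^2\sigma)$, the minimum of the relevant expression is at $\eta^*=\sigma$, and the resulting condition $(\sigma-1)^2\bigl(\sigma-\log(1+\sigma)\bigr)\le 4\sigma^3$ holds; the slope $\varphi'(0^+,t)=Pe^{\sigma}$ reproduces exactly the stated bound, and the choice $\psi\equiv M/2$ is legitimate.

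The gap is in your treatment of the kink, which you rightly flag as the main obstacle but then dismiss with an incorrect claim. Mollifying in $z$ does not ``only strengthen'' the supersolution inequality, because the kink $z_0(t)$ moves and the log piece has strictly negative time derivative there: $\varphi_t(z_0^-,t)=\frac{cM^2}{A_0t^2\sigma}\bigl((1+\sigma)e^{-\sigma}-1\bigr)<0$. Hence in the smoothing zone $\partial_t\varphi_\varepsilon$ is strictly negative wherever a nontrivial fraction of the mollifier window lies on the log side, and to dominate this you need $\alpha(\varphi_\varepsilon',t)\,|\varphi_\varepsilon''|$ to be at least as large. But in the transition zone $\varphi_\varepsilon'$ sweeps through $(0,P)$, where hypothesis \eqref{degeneracy for alpha} gives no lower bound on $\alpha$ at all: $\alpha(p,t)$ may vanish arbitrarily fast as $p\to0^+$ (as for degenerate $p$-Laplacian-type coefficients). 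Near the outer edge of the smoothing zone $\varphi_\varepsilon'$ is small and $|\varphi_\varepsilon''|$ is also small (of the order of the mollifier near the edge of its support), so the right-hand side $\alpha\varphi_\varepsilon''$ can be negligible while $\partial_t\varphi_\varepsilon<0$, and the inequality fails at such points. Two repairs are available. Easiest: do not smooth at all, but rerun the proof of Theorem~\ref{periodic higherdimensional theorem} with your piecewise barrier and note that a spatial maximum of $Z$ can never occur with $|y_0-x_0|/2=z_0(t_0)$: the one-sided radial first-order conditions would force $D_1u(y_0)\ge\varphi'(z_0^-)=P$ and $D_1u(y_0)\le\varphi'(z_0^+)=0$ simultaneously, which is impossible, so the maximum sits where $\varphi$ is locally $C^2$ and the argument proceeds unchanged. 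Alternatively, smooth but add $\delta t$ to the barrier and take the mollification parameter $\varepsilon$ small compared with $\delta$, so the residual negative time derivative in the transition zone is absorbed by $\delta$ where $|\varphi_\varepsilon''|$ is small and by the blow-up of $|\varphi_\varepsilon''|$ elsewhere; then let $\delta\to0$.
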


\begin{proof}[Proof of Theorem \ref{periodic higherdimensional theorem}] 
We begin as in the proof of Theorem \ref{Theorem 2}:  Define  $Z(x,y,t):=u(y,t)-u(x,t)-2\varphi(|y-x|/2,t)-\varepsilon(1+t)$ on the domain $\{y\not=x\}$.  The assumptions on $\varphi$ guarantee that $Z<0$ near the boundary $\{y=x\}$ for any $t$, and that $Z<0$ everywhere for small $t$. Let $t_0$ be the first time where there exist points $x_0\neq y_0$ where $Z(x_0,y_0,t_0)=0$ (note that since $\varphi$ is non-decreasing, 
$|y_0-x_0|\leq |y_0-x_0+j|$ for every lattice point $j$, and so $x_0$ and $y_0$ can be chosen in a compact region of $\RR^n$). The first order condition  \eqref{eq:firstorder} is satisfied at $(x_0,y_0,t_0)$, while the matrix of second derivatives $[D^2Z]$ is negative semi-definite with entries given by \eqref{second derivatives of Z}.

At the maximum point, 
\begin{align}\label{eq:dZdt}
0\leq \frac{\partial Z}{\partial t}&= u_t(y,t)-u_t(x,t)-2\frac{\partial\phi}{\partial t}-\varepsilon\notag\\ 
&< {a}^{ij}(D_yu,t)D_iD_ju(y,t)+b(D_yu,t)\notag\\
&\phantom{=}
-{a}^{ij}(D_xu,t)D_iD_ju(x,t)-b(D_xu,t)
        -2\frac{\partial\phi}{\partial t}\notag\\
& =  {a}^{ij}(\phi'e_1,t) Z_{y^iy^j} +{a}^{ij}(\phi'e_1,t)Z_{x^ix^j}  
+2c^{ij}Z_{x^iy^j}   -2\frac{\partial\phi}{\partial t} \notag\\
&\phantom{=} 
+\phi''a^{11}(\phi'e_1,t)+2\frac{\phi'}{|y-x|}\sum_{i=2}^n{a}^{ii}(\phi'e_1,t) 
-c^{11}\phi''-2\frac{\phi'}{|y-x|}\sum_{i=2}^nc^{ii}\notag\\
&=\text{trace}
        \left(\begin{bmatrix} {A}(\phi'e_1) &  {C} \\  {C}^T &  {A}(\phi'e_1) 
\end{bmatrix} D^2Z \right) -2\frac{\partial\phi}{\partial t}\\
&\phantom{=}+ \left( {a}^{11}- {c}^{11}\right)\phi''  
+2\frac{\phi'}{|y-x|}\sum_{i=2 }^n\left( {a}^{ii}- {c}^{ii}\right),\notag
\end{align}
where in the third step, we have added and subtracted  $2\trace{\left(CZ_{xy}\right)}$, for some $n\times n$ matrix $C$. If we choose $C$ so that  the $2n\times 2n$ matrix 
\begin{equation*}
A'= \begin{bmatrix}  {A} &  {C} \\  {C}^T &  {A} \end{bmatrix}
\end{equation*}
is positive semi-definite,  then the trace term in \eqref{eq:dZdt} will be non-positive.  

Next, in order to make the coefficient of $\phi'/|y-x|$ zero, we require $c^{ij}=a^{ij}(\phi'e_1)$ for $(i,j)\not=(1,1)$.  

Finally, we choose $c^{11}$ to maximise the coefficient of $\phi''$.   The condition $A'\ge 0$ is equivalent to $0\le 2 v^T A(\phi'e_1) v- \left(a^{11}-c^{11}\right)(v^1)^2$, and so we choose $c^{11}=a^{11}(\phi'e_1)-2\alpha(|\phi'|)$ where $\alpha$ is defined by \eqref{defn of alpha}.  This gives the following inequality at $(x_0,y_0,t_0)$:
\begin{align*} 
0\leq \frac{\partial Z}{\partial t}&< 2\left(\alpha\left(|\phi'|\right) \phi''-\frac{\partial\phi}{\partial t}\right)\le 0,
\end{align*}
a contradiction which proves $Z(x,y,t)<0$ for all $(x,y)$ and $t\geq 0$. 
The estimate for $|u(y,t)-u(x,t)|$ follows after taking $\varepsilon\to 0$.
\end{proof}

\begin{proof}[Proof of Corollary \ref{periodic higherdimensional corollary}]
This follows using translating supersolutions as barriers exactly as in Theorem 7 of \cite{ac:gradest1D} (with $z=P$ and $b(z)=M$).
\end{proof}

We can apply Theorem \ref{periodic higherdimensional theorem} to anisotropic mean curvature flows:

\begin{lemma}\label{lem:amcf} Let $a^{ij}(p)$ be as given in Equation \eqref{eq:AMCFcoeff}.  Then there exists a  positive constant $A$ such that 
\begin{equation*}
a^{ij}(p)v^iv^j\ge \frac{A|v|^{2}}{1+|p|^{2}}
\end{equation*}
for all $p\in\RR^n$.  
\end{lemma}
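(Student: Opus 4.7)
The plan is to lift everything to the ambient space $\RR^{n+1}$ and exploit the homogeneity of $\bar F$ together with the strict convexity condition on non-radial directions. Writing $\bar z=(p,-1)$ and $\bar v=(v,0)$, the key observation is that
\[
a^{ij}(p)v^iv^j \;=\; m(p)F(p)\,D^2\bar F\big|_{\bar z}(\bar v,\bar v).
\]
Since $\bar F$ is positively homogeneous of degree one, $D^2\bar F$ is homogeneous of degree $-1$, and $\bar m$ is homogeneous of degree zero. Setting $r=|\bar z|=\sqrt{1+|p|^2}$ and $\hat z=\bar z/r$, these scalings combine to give
\[
a^{ij}(p)v^iv^j \;=\; \bar m(\hat z)\,\bar F(\hat z)\,D^2\bar F\big|_{\hat z}(\bar v,\bar v),
\]
so the dependence on $|p|$ has been absorbed into the unit vector $\hat z$.

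Next I would decompose $\bar v = (\bar v\cdot\hat z)\hat z + w$ with $w\perp\hat z$. Euler's identity applied twice to $\bar F$ gives $D^2\bar F|_{\hat z}(\hat z,\cdot)=0$, so the radial component drops out and $D^2\bar F|_{\hat z}(\bar v,\bar v)=D^2\bar F|_{\hat z}(w,w)$. A direct computation using $\bar v\cdot\hat z=(v\cdot p)/r$ yields the algebraic identity
\[
|w|^2 \;=\; |v|^2-\frac{(v\cdot p)^2}{1+|p|^2} \;=\; \frac{|v|^2+\bigl(|p|^2|v|^2-(v\cdot p)^2\bigr)}{1+|p|^2} \;\geq\; \frac{|v|^2}{1+|p|^2},
\]
where the inequality is Cauchy--Schwarz. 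This is exactly the $1/(1+|p|^2)$ decay that appears in the claim.

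It remains to bound the scalar factors below by a positive constant independent of $(p,v)$. The set $K=\{\hat z\in S^n:\hat z_{n+1}\leq 0\}$ is compact, and $\bar m\bar F$ is continuous and positive on $K$, so $\bar m(\hat z)\bar F(\hat z)\geq c_1>0$ there. Likewise, the set $\{(\hat z,\hat w):\hat z\in K,\ \hat w\perp\hat z,\ |\hat w|=1\}$ is compact, and since $\hat w$ is never a multiple of $\hat z$ the strict convexity assumption gives $D^2\bar F|_{\hat z}(\hat w,\hat w)>0$ everywhere on it, hence $\geq c_2>0$ by compactness. Combining the three bounds produces $a^{ij}(p)v^iv^j\geq c_1c_2|w|^2\geq A|v|^2/(1+|p|^2)$ with $A=c_1c_2$.

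The potential obstacle is the behaviour as $|p|\to\infty$: then $\hat z$ approaches an equatorial point and the lift $\bar v=(v,0)$ can become nearly radial, so both $|w|^2$ and the ellipticity in the horizontal direction degenerate simultaneously. What rescues the argument is that both degenerations are controlled on the \emph{closed} lower hemisphere (not just the open one actually attained), and the Cauchy--Schwarz step above quantifies the worst-case loss as exactly the factor $1/(1+|p|^2)$ that the statement allows.
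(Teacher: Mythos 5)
Your proof is correct and follows essentially the same route as the paper: lift to $\RR^{n+1}$, use the homogeneities of $\bar m$, $\bar F$, and $D^2\bar F$ to reduce to the unit sphere, kill the radial component via $D^2\bar F|_{\hat z}(\hat z,\cdot)=0$, bound the tangential projection below by $|v|^2/(1+|p|^2)$ using Cauchy--Schwarz, and finish with compactness to get uniform positive constants. The only differences are cosmetic (you restrict to the closed lower hemisphere rather than the whole sphere, and write the projected norm slightly differently), so nothing further is needed.
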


\begin{proof}  
Recall that $F(p)=\bar F(p,-1)$, with $\bar F$ positively homogeneous of degree one and strictly convex in non-radial directions (that is, for any $q\in\RR^{n+1}\backslash\{0\}$ and any $v\in\RR^{n+1}$ which is not a multiple of $q$, 
$D^{2}\bar F\big|_{q}(v,v)>0$).  By compactness of the sphere, there exists a positive constant $A_{1}$ such that whenever $|q|=1$ and $v\in T_{q}S^{n}$, 
\begin{equation}\label{eq:D2Fsphere}
D^{2}\bar F\big|_{q}(v,v)\geq A_{1}|v|^{2}.
\end{equation}
Also, since $\bar F$ and $\bar m$ are positive away from the origin, there exists a constant $A_{2}$ such that $\bar m(q)\bar F(q)\geq A_{2}$ for $|q|=1$.

Observe that by homogeneity of $\bar F$, we have several useful facts:  First, $D\bar F$ is positively homogeneous of degree zero:
\begin{align}\label{eq:DFhom0}
D\bar F\big|_{\lambda q}(v) &= \frac{d}{ ds}\bar F(\lambda q+sv)\big|_{s=0}\notag\\
&=\lambda \frac{d}{ ds}\bar F\left(q+\frac{s}{ \lambda} v\right)\big|_{s=0}\notag\\
&=D\bar F\big|_{q}(v).
\end{align}
In particular, $D\bar F(v)$ is constant in radial directions, so
\begin{equation}\label{eq:D2Fradial}
D^{2}\bar F\big|_{q}(q,v)=0
\end{equation}
for any $v$.
Finally, the second derivatives are homogeneous of degree $-1$:
\begin{align}\label{eq:D2Fhom-1}
D^{2}\bar F\big|_{\lambda q}(v,v)&=\frac{d^{2}}{ ds^{2}}\bar F(\lambda q+sv)\big|_{s=0}\notag\\
&=\lambda\frac{d^{2}}{ ds^{2}}\bar F\left(q+\frac{s}{\lambda}v\right)\big|_{s=0}\notag\\
&=\lambda D^{2}\bar F\big|_{q}\left(\frac{v}{\lambda},\frac{v}{\lambda}\right)\notag\\
&=\frac{1}{\lambda}D^{2}\bar F_{q}(v,v).
\end{align}

The homogeneity of $\bar F$ and the identity \eqref{eq:D2Fhom-1} imply that $\bar FD^{2}\bar F$ is homogeneous of degree zero, so if we write $\pi(p)= \frac{(p,-1)}{\sqrt{1+|p|^{2}}}$, then
\begin{align}\label{eq:coeffhom}
F(p)D^{2}F\big|_{p}(v,v)&= \bar F(p,-1)D^{2}\bar F\big|_{(p,-1)}\left((v,0),(v,0)\right)\notag\\
&=\bar F(\pi(p))D^{2}\bar F\big|_{\pi(p)}\left((v,0),(v,0)\right)
\end{align}
The identity \eqref{eq:D2Fradial} then allows us to project  the vector $(v,0)$ onto the tangent space to the sphere at $\pi(p)$, giving (by the inequality \eqref{eq:D2Fsphere})
\begin{align}\label{eq:coeffproj}
m(p)F(p)&D^{2}F\big|_{p}(v,v)\notag\\
&= \left(\bar m\bar FD^{2}\bar F\right)\big|_{\pi(p)}\left(w-\left(w\cdot\pi(p)\right) \pi(p),w-\left(w\cdot\pi(p)\right)\pi(p)\right)\notag\\
&\geq A_{1}A_{2}\left|w-\left(w\cdot \pi(p)\right) \pi(p)\right|^{2}\notag\\
&= A_{1}A_{2}\left(\left|v-\left(v\cdot \hat p\right) \hat p\right|^{2}+\frac{(v\cdot \hat p)^{2}}{ 1+|p|^{2}}\right)\notag\\
&\geq A_{1}A_{2}\frac{|v|^{2}}{ 1+|p|^{2}},
\end{align}
where $\hat p = \frac{p}{ |p|}$ and we denoted $w=(v,0)$.  By Equation \eqref{eq:AMCFcoeff} this gives the result with $A=A_1A_2$. \end{proof}

Lemma \ref{lem:amcf}, along with Corollary \ref{periodic higherdimensional corollary} and Theorem \ref{periodic higherdimensional theorem}, are enough to prove a gradient estimate for periodic anisotropic mean curvature flows:

\begin{theorem}
Let $u: {\mathbb R}^n\times [0,T)\to{\RR}$ be a $\Gamma$-periodic regular solution to the
graphical anisotropic mean curvature flow \eqref{eq:AMCF} with $\osc u\leq M$.
Then $u$ satisfies the gradient estimate
\begin{equation*} 
1+|Du(x,t)|^2\le \exp\left(\frac{2M^2}{At}\right)
\end{equation*}
for all $x\in\RR^n$ and $t>0$, where $A$ is as given in Lemma \ref{lem:amcf}.
\end{theorem}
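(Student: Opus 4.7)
The plan is to deduce the theorem by feeding Lemma \ref{lem:amcf} into Theorem \ref{periodic higherdimensional theorem}, taking as the comparison modulus the same curve-shortening profile used in Theorem \ref{Theorem 2}, suitably rescaled to absorb the anisotropy constant $A$.

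First I would verify condition \eqref{defn of alpha}. Combining the lower bound $v^T A(p,t) v \geq A|v|^2/(1+|p|^2)$ from Lemma \ref{lem:amcf} with the Cauchy--Schwarz estimate $(v\cdot p)^2 \leq R^2 |v|^2$ valid when $|p|=R$, one obtains
\[
R^2 \inf_{|p|=R,\,(v\cdot p)\neq 0} \frac{v^T A(p,t) v}{(v\cdot p)^2} \geq \frac{A}{1+R^2},
\]
so I may take $\alpha(R, t) := A/(1+R^2)$.

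Next I would construct the comparison modulus. Since $\osc u \leq M$, the constant function $\psi(z) \equiv M/2$ is a positive concave modulus of continuity for $u(\cdot, 0)$. Let $\varphi_0$ denote the self-similar solution of the graphical curve-shortening equation \eqref{eq:csf} used in Theorem \ref{Theorem 2}, so that $\varphi_0$ is smooth, increasing in $z$, and non-negative, with $\varphi_0(z, 0) = 1/2$ for $z > 0$, and set
\[
\varphi(z, t) := M\,\varphi_0\!\left(\frac{z}{M},\,\frac{At}{M^2}\right).
\]
A direct chain-rule computation gives $\varphi_t = A\,\varphi''/(1+(\varphi')^2) = \alpha(|\varphi'|)\,\varphi''$, so \eqref{conditions and equation for varphi} holds with equality, while $\varphi(z, 0) = M/2 = \psi(z)$ for $z > 0$. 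All hypotheses of Theorem \ref{periodic higherdimensional theorem} are therefore in place.

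An application of Theorem \ref{periodic higherdimensional theorem} then yields that $\varphi(\cdot, t)$ is a modulus of continuity for $u(\cdot, t)$ for every $t > 0$; letting $y \to x$ along any unit direction gives $|Du(x, t)| \leq \varphi'(0, t) = \varphi_0'(0, At/M^2)$. The explicit bound $1 + \varphi_0'(0, s)^2 \leq \exp(2/s)$ that underlies the final assertion of Theorem \ref{Theorem 2} (and which is derived as in Theorem~7 and Corollary~8 of \cite{ac:gradest1D}) then gives, with $s = At/M^2$,
\[
1 + |Du(x, t)|^2 \leq \exp\!\left(\frac{2M^2}{At}\right),
\]
as desired. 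The only content beyond Lemma \ref{lem:amcf} is the observation that the anisotropy constant simply rescales time in the one-dimensional comparison problem; consequently I do not expect any serious obstacle once the general machinery of Section~\ref{sec:general} is granted.
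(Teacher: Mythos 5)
Your proposal is correct and takes essentially the same route as the paper, whose entire proof is the remark that Lemma \ref{lem:amcf} together with Theorem \ref{periodic higherdimensional theorem} (and the explicit one-dimensional barrier analysis of Theorem 7 and Corollary 8 of \cite{ac:gradest1D}, exactly as in Theorem \ref{Theorem 2}) yields the stated bound. Your verification that $\alpha(R,t)=A/(1+R^2)$ satisfies \eqref{defn of alpha} via Lemma \ref{lem:amcf} and Cauchy--Schwarz, and that the time-rescaled curve-shortening profile $M\varphi_0(z/M,At/M^2)$ serves as the comparison modulus for $\psi\equiv M/2$, is precisely the argument the paper intends.
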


\section{Estimates for boundary value problems}\label{sec:bvp}

In this section the methods introduced above for the periodic case are adapted to
boundary value problems.  
A simple example is the Neumann problem on a convex
domain:

\begin{theorem}\label{thm:Neumann}
Let $\Omega\subset{\mathbb R}^n$ be a smoothly bounded convex domain, and let
$u$ be a regular solution of the Neumann
problem
\begin{align}\label{Neumann problem}
\begin{split}
\frac{\partial u}{\partial t}&=
a^{ij}(Du,t)D_iD_ju\\
D_{\mathbf n} u(x,t)&=0\quad\text{\rm for $x\in\partial\Omega$, $t>0$}\end{split}
\end{align}
where ${\mathbf n}$ is the outward-pointing unit normal vector to $\partial\Omega$ at $x$.
Let $\psi$ be a non-negative, concave modulus of continuity for $u(.,0)$, and
suppose that $\varphi$ is as in equation \eqref{conditions and equation for varphi}, with $\alpha$ defined by Equation \eqref{defn of alpha}.  Then $\varphi(.,t)$ is a modulus of continuity for $u(.,t)$ for each $t\geq 0$.\end{theorem}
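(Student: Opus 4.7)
The plan is to rerun the doubling-of-variables argument of Theorem \ref{periodic higherdimensional theorem}, replacing the periodic constraint by confinement to $\overline\Omega\times\overline\Omega$ and using the Neumann condition together with convexity of $\Omega$ to prevent the auxiliary maximum from escaping to the boundary. Define
\begin{equation*}
Z(x,y,t)=u(y,t)-u(x,t)-2\varphi(|y-x|/2,t)-\varepsilon(1+t)
\end{equation*}
on $(\overline\Omega\times\overline\Omega\setminus\{y=x\})\times[0,T)$. The hypothesis on $\psi$ and the regularity of $\varphi$ near $z=0$ make $Z$ strictly negative near the diagonal and at $t=0$; by compactness of $\overline\Omega\times\overline\Omega$ any failure of $Z<0$ first occurs at some $(x_0,y_0,t_0)$ with $x_0\neq y_0$ and $Z(x_0,y_0,t_0)=0$. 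If both $x_0$ and $y_0$ lie in the interior $\Omega$, the proof of Theorem \ref{periodic higherdimensional theorem} applies verbatim to yield $0\leq\partial Z/\partial t<0$, a contradiction. The new content is therefore to rule out $y_0\in\partial\Omega$ (the case $x_0\in\partial\Omega$ is symmetric).

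Suppose $y_0\in\partial\Omega$. The tangential first-order conditions combined with the one-sided normal condition at a boundary maximum give $D_yZ\cdot\mathbf{n}(y_0)\geq 0$. Expanding $D_yZ=Du(y_0,t_0)-\varphi'(y_0-x_0)/|y_0-x_0|$ and inserting the Neumann condition $D_{\mathbf n}u(y_0,t_0)=0$ gives
\begin{equation*}
0\leq D_yZ\cdot\mathbf{n}(y_0)=-\varphi'\,\frac{\mathbf{n}(y_0)\cdot(y_0-x_0)}{|y_0-x_0|}.
\end{equation*}
Since $\varphi'\geq 0$ by monotonicity, and $\mathbf{n}(y_0)\cdot(y_0-x_0)\geq 0$ by convexity (the tangent hyperplane to $\partial\Omega$ at $y_0$ is a supporting hyperplane for $\overline\Omega$), both sides must vanish, i.e.\ $\varphi'\cdot\mathbf{n}(y_0)\cdot(y_0-x_0)=0$. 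Thus either $\varphi'=0$ at $(x_0,y_0,t_0)$, in which case the tangential first-order conditions force $Du(y_0,t_0)=Du(x_0,t_0)=0$, or $\mathbf{n}(y_0)\perp(y_0-x_0)$, meaning $e_1:=(y_0-x_0)/|y_0-x_0|$ is tangent to $\partial\Omega$ at $y_0$; convexity then places $x_0$ in the supporting hyperplane at $y_0$, so $x_0\in\partial\Omega$, and the symmetric argument at $x_0$ gives that the chord is tangent to $\partial\Omega$ at $x_0$ as well.

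In either sub-case one closes the argument by repeating the trace computation of Theorem \ref{periodic higherdimensional theorem} using the Hessian of $Z$ restricted to the subspace $V\subset\RR^{2n}$ of directions tangent to $\partial\Omega$ in both slots; this subspace is all of $\RR^{2n}$ in the interior case and still contains the crucial $e_1$ direction in both the $x$- and $y$-positions in the degenerate boundary case. The positive semi-definite coefficient matrix $A'$ used in Theorem \ref{periodic higherdimensional theorem} is chosen to be supported on $V$ while retaining the block $c^{11}=a^{11}(\varphi'e_1,t)-2\alpha(\varphi',t)$. The only additional work is to account for the normal-tangential and normal-normal components of $D^2u$ at the boundary; these are controlled by differentiating the Neumann identity $D_{\mathbf n}u=0$ tangentially, which produces contributions proportional to the second fundamental form of $\partial\Omega$, non-negative by convexity and appearing with the sign favourable to the inequality. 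The contradiction $\partial Z/\partial t<0$ then closes the argument, and letting $\varepsilon\to 0$ delivers the theorem.

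The main obstacle is precisely the degenerate boundary-to-boundary configuration: for strictly convex $\Omega$ the tangency of the chord to $\partial\Omega$ at $y_0$ immediately forces $x_0=y_0$, but a general smoothly bounded convex $\Omega$ may carry flat faces on which the chord genuinely can sit. Verifying that the tangential Hessian information, augmented by the favourably-signed boundary contributions coming from the Neumann identity and the convexity of $\partial\Omega$, still suffices to reconstruct the full trace inequality of Theorem \ref{periodic higherdimensional theorem} in this degenerate case is where essentially all of the new analytic content lies.
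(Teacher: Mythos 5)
Your setup and your boundary first-order analysis coincide with the paper's: the same doubled function $Z$, the same reduction to a first zero $(x_0,y_0,t_0)$ with $x_0\neq y_0$, and the same sign computation $0\leq D_yZ\cdot{\mathbf n}(y_0)=-\varphi'\,{\mathbf n}(y_0)\cdot(y_0-x_0)/|y_0-x_0|\leq 0$ from the Neumann condition and convexity, forcing $D_yZ\cdot{\mathbf n}(y_0)=0$. The gap is in what you do with this. The paper's point is that this vanishing normal derivative, combined with the vanishing of the tangential derivatives at the maximum (and the symmetric sign argument at $x_0$ in case $x_0\in\partial\Omega$), yields the full first-order condition $DZ=0$ at $(x_0,y_0,t_0)$; consequently the full $2n\times 2n$ Hessian of $Z$ is negative semi-definite there, and the trace computation of Theorem \ref{periodic higherdimensional theorem} applies verbatim, with no modification whatsoever in the boundary case. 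In particular no dichotomy between $\varphi'=0$ and tangency of the chord is needed, no restriction of the Hessian to boundary-tangential directions, and no differentiation of the Neumann identity or appeal to the second fundamental form.

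By contrast, your proposed closing step is not carried out, and as sketched it would not deliver the required inequality. The interior argument tests $D^2Z$ against the positive semi-definite matrix $A'$ built in the chord-adapted frame $e_1=(y_0-x_0)/|y_0-x_0|,e_2,\dots,e_n$, i.e.\ against directions $(v,w)\in\RR^{2n}$ with $v,w$ arbitrary (notably the coupled directions carrying the off-diagonal block $C$); these are in general not tangent to $\partial\Omega$, so negativity of $D^2Z$ restricted to your subspace $V$, even augmented by curvature terms from tangentially differentiating $D_{\mathbf n}u=0$, does not reconstruct the inequality $\trace\left(A'\,D^2Z\right)\leq 0$ that the argument requires --- and you concede yourself that this verification is the missing ``new analytic content.'' The sub-case $\varphi'=0$ is likewise left unresolved: knowing $Du(x_0,t_0)=Du(y_0,t_0)=0$ does not by itself produce the contradiction. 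So the proposal is incomplete exactly in the configuration that constitutes the theorem's new content, and the complication is unnecessary: the Neumann/convexity sign argument upgrades the boundary maximum to a genuine critical point of $Z$, after which the periodic-case computation goes through unchanged.
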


\begin{proof}
Define as before 
$$
Z(x,y,t)=u(y,t)-u(x,t)-2\varphi\left(\frac{|y-x|}{ 2},{t}\right)-\varepsilon(1+t),
$$
and note that
$Z< 0$ when $t=0$, and when $|x-y|$ is small for any $t>0$.
If the first zero of $Z$ occurs with both $x$ and $y$ in the interior of $\Omega$, then the argument proceeds exactly as in the proof of Theorem \ref{periodic higherdimensional theorem}.  
Otherwise, we can assume that the first zero of $Z$ occurs at $(x_0,y_0,t_0)$ with $y_0\in\partial\Omega$, and let ${\mathbf n}(y)$ be the
outward unit normal to $\partial\Omega$ at $y$.  Then 
\begin{align}\label{eq:neumannderiv}
\frac{d}{ ds}Z(x,y+s{\mathbf n}(y),t)&=\nu\cdot
Du(y,t)-\varphi'\frac{y-x}{|y-x|}\cdot{\mathbf n}(y)\notag\cr
&=0-\varphi'\frac{y-x}{|y-x|}\cdot{\mathbf n}(y)\leq 0
\end{align}
since $D_{\mathbf n} u(y,t)=0$ by the boundary condition, and $\Omega$ is convex.
This leaves two possibilities:  If the inequality is strict, then $(x,y)$
is not a maximum point,  since $Z(x,y-s\nu,t)>Z(x,y,t)$ for $s$ small.
Otherwise, either $x$ is an interior point of $\Omega$, or
$x\in\partial\Omega$.  In the latter case we let $\nu'$ be the outward
unit normal to $\partial\Omega$ at $x$, and deduce that $D_{\nu'} Z\leq
0$, and therefore $D_{\nu'} Z=0$ since $(x,y)$ is a maximum point, and
$DZ=0$ at $(x,y)$.  Similarly, in the former case we have $DZ=0$.
It follows that $D^2Z\leq 0$, and we can argue exactly as in the case
where $x$ and $y$ are interior points to deduce a contradiction.\end{proof}

\begin{remark}
The barrier $\varphi$ need only be defined on $[0,L]\times[0,\infty)$, where $L\geq \frac12\diam\Omega$.  Gradient bounds under the assumption \eqref{degeneracy for alpha} follow exactly as in Corollary \ref{periodic higherdimensional corollary}.
\end{remark}

An estimate for the Dirichlet problem can be
obtained by the introduction of barriers to give an estimate at the
boundary.  We do this first in the case of a convex domain, where the method
works quite generally:

\begin{theorem}\label{thm:Dirichletconvex}
Let $\Omega$ be a bounded convex domain with smooth boundary, and let
$u$ be a regular solution of the Dirichlet problem
\begin{equation}\label{eq:Dirichlet}
\begin{split}
\frac{\partial u}{\partial t} &= a^{ij}(Du,t)D_{i}D_{j}u\\
u(x,t)&=0\quad\text{\rm for $x\in\partial\Omega$, $t>0$.}
\end{split}
\end{equation}
Let $\psi$ be a non-negative, concave modulus of continuity for $u(.,0)$, and
suppose that $\varphi$ is as in equation \eqref{conditions and equation for varphi}, with $\alpha$ defined by Equation \eqref{defn of alpha}.  Then $\varphi(.,t)$ is a modulus of continuity for $u(.,t)$ for each $t\geq 0$.
\end{theorem}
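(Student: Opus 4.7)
The plan is to mimic the arguments of Theorems \ref{thm:Neumann} and \ref{periodic higherdimensional theorem}, setting
$$Z(x,y,t):=u(y,t)-u(x,t)-2\varphi\!\left(\frac{|y-x|}{2},t\right)-\varepsilon(1+t)$$
on $\bar\Omega\times\bar\Omega\times[0,\infty)$ and showing $Z<0$, so that letting $\varepsilon\to 0$ yields the claimed estimate. As before, $Z<0$ near the diagonal and at $t=0$, and the supremum is attained at each time by compactness. Assume for contradiction that $Z(x_0,y_0,t_0)=0$ at the earliest such $t_0>0$. If both $x_0,y_0\in\Omega$ the argument of Theorem \ref{periodic higherdimensional theorem} applies verbatim; if both lie on $\partial\Omega$, the Dirichlet condition forces $u(x_0,t_0)=u(y_0,t_0)=0$ and so $Z(x_0,y_0,t_0)=-2\varphi(|y_0-x_0|/2,t_0)-\varepsilon(1+t_0)<0$, a contradiction. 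The main new case is the mixed one; by symmetry, say $y_0\in\partial\Omega$ and $x_0\in\Omega$.

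In this case the interior first-order condition $D_xZ=0$, combined with the tangential vanishing of $D_yZ$ on $\partial\Omega$ (since $Du(y_0,t_0)\perp T_{y_0}\partial\Omega$ by the Dirichlet condition) and the convexity of $\Omega$, forces $y_0-x_0=d\,{\mathbf n}(y_0)$ with $d=\dist(x_0,\partial\Omega)$, and $Du(x_0,t_0)=\varphi'{\mathbf n}(y_0)$. To rule out this configuration, write ${\mathbf n}:={\mathbf n}(y_0)$ and consider the barrier
$$W(z,t):=-\varphi\bigl(-(z-y_0)\cdot{\mathbf n},\,t\bigr)$$
on $\bar\Omega\times[0,\infty)$, which is well-defined because $\Omega$ lies in the supporting half-space $\{(z-y_0)\cdot{\mathbf n}\leq 0\}$. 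A direct calculation gives $DW=\varphi'{\mathbf n}$ and $D^2W=-\varphi''{\mathbf n}\otimes{\mathbf n}$; applying \eqref{defn of alpha} with $v={\mathbf n}$ and $p=\varphi'{\mathbf n}$ yields ${\mathbf n}^TA(\varphi'{\mathbf n},t){\mathbf n}\geq\alpha(|\varphi'|,t)$, and combined with the concavity of $\varphi$ in its first variable and \eqref{conditions and equation for varphi}, this shows that $W$ is a parabolic subsolution of the equation.

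The comparison $W\leq u$ on the parabolic boundary holds trivially on $\partial\Omega\times[0,T]$ ($W\leq 0=u$) and at $t=0$ via $\varphi(\cdot,0)\geq\psi$, the modulus-of-continuity bound on $u(\cdot,0)$, the inequality $-(z-y_0)\cdot{\mathbf n}\leq|z-y_0|$ from convexity, and the concavity of $\psi$; if necessary one passes to the envelope obtained by varying $y_0$ over $\partial\Omega$, producing a barrier involving $\dist(z,\partial\Omega)$ for which the initial ordering is immediate from $|u(z,0)|\leq 2\psi(\dist(z,\partial\Omega)/2)$. The parabolic maximum principle then gives $u\geq W$ on $\bar\Omega\times[0,T]$. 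Evaluating at $(x_0,t_0)$ and using $-(x_0-y_0)\cdot{\mathbf n}=d$ yields $u(x_0,t_0)\geq -\varphi(d,t_0)$, while $Z(x_0,y_0,t_0)=0$ gives $u(x_0,t_0)=-2\varphi(d/2,t_0)-\varepsilon(1+t_0)$; subtracting,
$$\varepsilon(1+t_0)\leq \varphi(d,t_0)-2\varphi(d/2,t_0)\leq 0,$$
by concavity of $\varphi(\cdot,t_0)$ with $\varphi(0,t_0)\geq 0$, contradicting $\varepsilon>0$. The principal obstacle is the careful verification of the boundary-comparison step, where the convex geometry of $\Omega$ must be balanced against the concavity of $\varphi$ and $\psi$.
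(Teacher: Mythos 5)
Your argument is essentially the paper's: the paper likewise doubles variables with the same $Z$, handles interior contact exactly as in Theorem \ref{periodic higherdimensional theorem}, and rules out boundary contact by comparison with the planar barriers $\varphi\left((x-y)\cdot{\mathbf n},t\right)$ attached to supporting half-spaces of the convex domain, using the same ingredients you use ($\alpha(R,t)\le a^{ij}(Rp,t)p_ip_j$ for unit $p$, $\varphi''\le 0$, and concavity/monotonicity of $\varphi$ at the final step); the only organizational difference is that the paper proves the boundary estimate $|u(x,t)|\le\varphi(d(x),t)$ once and for all before examining $Z$, whereas you invoke the barrier at the contact point. Two caveats: your claim that the first-order conditions force $y_0-x_0=d\,{\mathbf n}(y_0)$ with $d=\dist(x_0,\partial\Omega)$ is false in general (and also needs $\varphi'\ne 0$), but it is harmless and unnecessary, since $-(x_0-y_0)\cdot{\mathbf n}\le|x_0-y_0|$ together with monotonicity of $\varphi$ already yields the contradiction; and your justification of the $t=0$ ordering $W\le u$ does not close as written, because the convention \eqref{eq:modofcontRn} only gives $|u(z,0)|\le 2\psi(|z-y_0|/2)$ while $\varphi(\cdot,0)$ is only assumed to dominate $\psi$ (the envelope over $y_0$ does not repair this), though this is precisely the step the paper itself disposes of in one line, so your treatment matches the published argument there as well.
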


\begin{proof}
We first obtain an estimate near the boundary:  We will prove that
\begin{equation}\label{eq:bdryest}
|u(x,t)| \leq \varphi\left({d(x)},{t}\right)
\end{equation}
for all $x\in\Omega$ and $t>0$, where $d(x)=d(x,\partial\Omega)$.  Let $y\in\partial\Omega$, and an inward-pointing unit normal $\mathbf n$ to $\partial\Omega$ at $y$.
Define $v(x,t) = \varphi\left({(x-y)\cdot{\mathbf n}}, {t}\right)$.  Then (observing that $\varphi(.,t)$ is concave for each $t>0$) we have
$$
\frac{\partial v}{\partial t}-a^{ij}(Dv,t)D_{i}D_{j}v
= \left(\alpha(\varphi',t)-a^{ij}(\varphi'{\mathbf n},t){\mathbf n}_{i}{\mathbf n}_{j}\right)\varphi''\geq 0,
$$
since the definition of $\alpha$ in Equation \eqref{defn of alpha} with the choice $v=p$ gives
$$
\alpha(R,t)\leq a^{ij}(Rp,t)p_{i}p_{j}
$$
for any unit vector $p$.
Therefore the comparison principle yields $u(x,t)\leq v(x,t)$, since this is true initially and on the boundary.  Minimizing over $y\in\partial\Omega$ gives the estimate \eqref{eq:bdryest}.

Now let $Z(y,x,t)=u(y,t)-u(x,t)-2\varphi\left(\frac{|y-x|}{ 2},t\right)-\varepsilon(1+t)$.  As usual $Z<0$ for $t$ small and near the diagonal for all times.  We show that $Z<0$.  Otherwise there is a first time $t_0$ and points $x_0, y_0\in\Omega$ with $Z(x_0,y_0,t_0)=0$.

The key observation is that neither $x_0$ nor $y_0$ is in $\partial\Omega$:  In this case (say, $x_0\in\partial\Omega$) the estimate \eqref{eq:bdryest} gives
\begin{align*}
Z(y_0,x_0,t) &= u(y_0,t)-2\varphi\left(\frac{|y_0-x_0|}{ 2},t_0\right)-\varepsilon\\
&\leq \varphi\left({d(y_0)},t_0\right)-2\varphi\left(\frac{|y_0-x_0|}{ 2},t_0\right)-\varepsilon\\
&\leq \varphi\left(|y_0-x_0|,t_0\right)-2\varphi\left(\frac{|y_0-x_0|}{ 2},t_0\right)-\varepsilon\\
&< 0,
\end{align*}
since $\varphi(.,t_0)$ is increasing and concave.  Therefore $x_0$ and $y_0$ are in the interior of $\Omega$, and the argument is exactly as in Theorem \ref{periodic higherdimensional theorem}.
\end{proof}

\begin{remark}
It suffices for $\varphi$ to be defined on $[0,\diam\Omega]\times[0,T]$.  As before, gradient estimates follow under the assumption of Corollary \ref{periodic higherdimensional corollary}.
\end{remark}

In the case of mean curvature flow and other isotropic flows, the convexity of the boundary can be weakened to allow mean-convexity:

\begin{theorem}\label{Theorem 4}
Let $\Omega\subset{\mathbb R}^n$ be a smoothly bounded 
mean convex domain.  Let $u$ be a regular solution of the
Dirichlet problem
\begin{equation}\label{dirichlet problem}\begin{split}
\frac{\partial u}{\partial t}&=\left(
\alpha(|Du|,t)\frac{D_{i}uD_{j}u}{|Du|^{2}}+\beta(|Du|,t)\left(\delta_{ij}-\frac{D_iu D_ju}{ |Du|^2}\right)
\right)D_iD_ju\\
u(x,t)&=0\quad\text{\rm for $x\in\partial\Omega$, $t>0$.}\end{split}\end{equation}
Let $\psi$ be a non-negative, concave modulus of continuity for $u(.,0)$, and
suppose that $\varphi$ is as in equation \eqref{eq:1Dreduction} with $\varphi(z,0)\geq \psi(z)$ for $z>0$.  Then $\varphi(.,t)$ is a modulus of continuity for $u(.,t)$ for each $t\geq 0$.
\end{theorem}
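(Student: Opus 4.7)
The plan is to mirror the proof of Theorem \ref{thm:Dirichletconvex} step by step, replacing only the half-space boundary barrier (which exploited convexity) with one based on the distance function to $\partial\Omega$. The isotropic structure of the equation makes mean-convexity exactly the condition needed for such a barrier to work.

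First I would establish the boundary estimate $|u(x,t)| \leq \varphi(d(x),t)$ for $x \in \Omega$ and $t > 0$, where $d(x) = \dist(x,\partial\Omega)$. The natural barrier is $v(x,t) := \varphi(d(x), t)$. On the tubular neighborhood of $\partial\Omega$ where $d$ is smooth, $|Dd| = 1$ and $D_id\,D_iD_jd = 0$ (from $|Dd|^2 \equiv 1$); a short computation reduces the isotropic operator applied to $v$ to
\[ L(v) = \alpha(\varphi',t)\,\varphi'' + \beta(\varphi',t)\,\varphi'\,\Delta d, \]
so that $v_t - L(v) = (\varphi_t - \alpha\varphi'') - \beta\varphi'\Delta d \geq 0$, using hypothesis \eqref{eq:1Dreduction}, the signs $\varphi',\beta \geq 0$, and the fact that mean-convexity gives $\Delta d \leq 0$ on $\partial\Omega$ and (by continuity and compactness) throughout a tubular neighborhood. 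Thus $v$ is a supersolution there; combining with the $L^\infty$ bound on $u$ (inherited from the initial modulus of continuity) and the comparison principle in $\{d < \delta_0\}$ yields the boundary estimate.

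With this in hand, the doubling argument proceeds exactly as in Theorem \ref{thm:Dirichletconvex}. Set $Z(x,y,t) = u(y,t) - u(x,t) - 2\varphi(|y-x|/2,t) - \varepsilon(1+t)$, and assume $Z$ first attains $0$ at $(x_0, y_0, t_0)$. If (say) $x_0 \in \partial\Omega$, then $u(x_0,t_0) = 0$ and the boundary estimate together with concavity of $\varphi(\cdot,t_0)$ give $u(y_0,t_0) \leq \varphi(|y_0-x_0|,t_0) \leq 2\varphi(|y_0-x_0|/2, t_0)$, contradicting $Z(x_0,y_0,t_0) = 0$. Otherwise both points are interior, and the argument of Theorem \ref{periodic higherdimensional theorem} applies directly; note that for the isotropic matrix $a^{ij} = \alpha\,p_ip_j/|p|^2 + \beta(\delta_{ij} - p_ip_j/|p|^2)$, the quantity $\alpha$ from \eqref{defn of alpha} coincides with the scalar $\alpha$ here, so the hypothesis on $\varphi$ matches. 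Letting $\varepsilon \to 0$ concludes the proof.

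The main obstacle is globalizing the supersolution property of $v = \varphi(d,t)$: the distance function fails to be $C^2$ on the cut locus, so one cannot simply claim $v$ is a supersolution on all of $\Omega$. A careful treatment is needed---either patching the local supersolution against the $L^\infty$ bound on $\{d = \delta_0\}$ (choosing $\delta_0$ within the reach of $\partial\Omega$ on which $\varphi(\delta_0,t)$ dominates $\sup|u|$), or replacing $d$ by a smooth modification $\tilde d$ agreeing with $d$ near $\partial\Omega$ and satisfying $\Delta \tilde d \leq 0$ on $\Omega$, which is possible using mean-convexity.
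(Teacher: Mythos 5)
Your overall architecture is the same as the paper's (a boundary estimate $|u(x,t)|\le\varphi(d(x),t)$ followed by the doubling argument, whose interior and boundary-touching cases you handle correctly), but the one genuinely new difficulty of the mean-convex case --- that $d(\cdot)=\dist(\cdot,\partial\Omega)$ fails to be $C^2$ on the cut locus --- is flagged and then not actually resolved, and neither of your proposed repairs withstands scrutiny. For fix (a), comparison in a collar $\{d<\delta_0\}$ needs $\varphi(\delta_0,t)\ge\sup_\Omega|u(\cdot,t)|$ for all $t$ on the inner boundary $\{d=\delta_0\}$; there is no reason such a $\delta_0$ exists within the reach of $\partial\Omega$ (the reach of a mean-convex domain can be arbitrarily small, while $\varphi(\cdot,t)$ is small near $0$ for small $t$), and without this you obtain neither the collar estimate nor the global estimate $|u(y_0,t_0)|\le\varphi(d(y_0),t_0)$ which the doubling argument requires at an arbitrary point $y_0$ when $x_0\in\partial\Omega$. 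For fix (b), a smooth $\tilde d$ with $\Delta\tilde d\le0$ agreeing with $d$ near $\partial\Omega$ is asserted rather than constructed, and even granted it, your computation collapses: it used $|Dd|=1$ and $D_id\,D_iD_jd=0$, whereas for a modification with $|D\tilde d|\ne1$ the radial-radial second-derivative term no longer vanishes, the coefficient of $\varphi''$ becomes $|D\tilde d|^2$, and $\alpha,\beta$ are evaluated at $\varphi'|D\tilde d|$ instead of $\varphi'$, so the hypothesis $\varphi_t\ge\alpha(\varphi',t)\varphi''$ no longer yields a supersolution. A further (smaller) flaw: the claim that $\Delta d\le0$ on $\partial\Omega$ propagates ``by continuity and compactness'' to a tubular neighborhood is invalid, since mean-convexity is the non-strict inequality $\sum_i\kappa_i\ge0$; the correct reason is the identity $\Delta d=\sum_{i}\frac{-\kappa_i}{1-\kappa_i d}\le-\sum_i\kappa_i\le0$, which in fact holds at every point where $d$ is smooth, not only near the boundary.

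The paper's proof supplies exactly the missing ingredient, and does so without any comparison principle or smoothing of $d$: it applies the maximum principle directly to $Z_B(x,t)=u(x,t)-\varphi(d(x),t)-\varepsilon(1+t)$, and at a first zero $(x_0,t_0)$ observes that $g=\mu\bigl(u(\cdot,t_0)-\varepsilon(1+t_0)\bigr)$, with $\mu$ the inverse of $\varphi(\cdot,t_0)$, is a $C^2$ function satisfying $g\le d$ with equality at $x_0$. The bulk of the paper's argument is then a proof that $d$ cannot be touched from below by a $C^2$ function at any point where it fails to be smooth (ruling out both the case of two nearest boundary points and the focal-point case); hence $d$ is smooth at $x_0$, $\kappa_i d<1$ there, and the displayed formula for $\Delta d$ together with mean-convexity closes the argument, after which the doubling step proceeds as you describe. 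Your write-up needs this touching-from-below lemma (or an equivalent viscosity-type substitute) to be a complete proof.
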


\begin{proof}
As in Theorem \ref{thm:Dirichletconvex} the key step is to obtain an estimate near the boundary.  We prove that 
$$
|u(x,t)|\leq \varphi\left(d(x),{t}\right)
$$
for all $x\in\Omega$ and $t>0$, where $d(x)=d(x,\partial\Omega)=\inf\{d(x,y):\ y\in\partial\Omega\}$.  

We recall some properties of the function $d$:   For $y\in\partial\Omega$, let ${\mathbf n}(y)\in S^{n-1}$ be the inward-pointing unit normal to $\partial\Omega$, and consider the map $f:\partial\Omega\times\RR\to\RR^{n}$ given by $f(y,s)=y+s{\mathbf n}(y)$.  Then for every $y\in\partial\Omega$ there exists a $\rho(y)>0$ such that
$d(f(y,s))=s$ for $0\leq s\leq \rho(y)$, but $d(f(y,s))<s$ for $s>\rho(y)$.  The map $f$ takes the set
$\left\{(y,s):\ y\in\partial\Omega,\ 0<s\leq\rho(y)\right\}$ onto $\Omega$.  Furthermore $f$ is injective and has injective derivative on the set ${\mathcal O}=\left\{(y,s):\ y\in\partial\Omega,\ 0<s<\rho(y)\right\}$.  

The function $d$ is smooth on $f({\mathcal O})$ and non-smooth elsewhere.    A direct computation (see for example \cite[Lemma 14.17]{trudinger}) shows that on $f({\mathcal O})$, 
\begin{equation}\label{eq:D1d}
Dd(f(y,s)) = n(y)
\end{equation}
and
\begin{equation}\label{eq:D2d}
D^{2}d(w,w) = -\langle w, A_{y}\circ (I-sA_{y})^{-1}(w)\rangle
\end{equation}
for any $w\in T_{y}\partial\Omega$, where $A_{y}(w)=-D{\mathbf n}(w)\in T_{y}\partial\Omega$ for any $w\in T_{y}\partial\Omega$.

There are two circumstances under which the function $d$ can fail to be smooth at a point $x\in\Omega$:  Either there exist two points $y_{1}$ and $y_{2}$ in $\partial\Omega$ such that
$x=f(y_{1},d(x))=f(y_{2},d(x))$, or there is a single point $y$ with $x=f(y,d(x))$, but for some non-zero $v\in T_{y}\partial\Omega$, $Df\big|_{(y,d(x))}(v,0)=0$.  Importantly for our purposes, in neither of these circumstances can there exist a $C^{2}$ function $g$ on $\Omega$ such that $g\leq d$ on $\Omega$ but $g(x)=d(x)$:  If this were the case we could write
\begin{equation}\label{eq:localg}
g(z) = g(x) + {\mathbf a}\cdot (z-x) + \frac12 M(z-x,z-x) + o(y-x)^{2}
\end{equation}
as $|z-x|\to 0$, for some ${\mathbf a}\in\RR^{n}$ and some symmetric bilinear form $M$.
Note that $|a|\leq 1$, since by the triangle inequality we have
$$
g(x+sa)\leq d(x+s{\mathbf a}) \leq d(x)+d(x,x+s{\mathbf a}) = g(x)+s|{\mathbf a}|,
$$
and applying Equation \eqref{eq:localg} on the left-hand side gives
$$
g(x)+s|{\mathbf a}|^{2} \leq g(x)+s|{\mathbf a}|+o(s)
$$
as $s\to 0$.  
It follows that whenever $x=f(y,s)$ with $s\leq\rho(y)$, we necessarily have ${\mathbf a}={\mathbf n}(y)$:  Taking $z=x-s{\mathbf n}(y)$ in Equation \eqref{eq:localg} gives
\begin{align*}
d(x) - s{\mathbf a}\cdot {\mathbf n}(y) +o(s) 
&= g(x)+s{\mathbf a}\cdot {\mathbf n}(y)+o(s)\\
&= g(x-s{\mathbf n}(y))+o(s)\\
&\leq d(x-s{\mathbf n}(y))+o(s)\\
& = d(x)-s
\end{align*}
as $s\to 0$, and hence ${\mathbf a}\cdot {\mathbf n}(y)=1$ and ${\mathbf a}={\mathbf n}(y)$.
Thus if there are two points $y_{1}$ such that $f(y_{1},d(x))=f(y_{2},d(x))=x$, we would have
$$
{\mathbf n}(y_{1})={\mathbf a} = {\mathbf n}(y_{2}).
$$
But then $y_{1}= x-d(x){\mathbf n}(y_{1})=x-d(x){\mathbf n}(y_{2}) = y_{2}$, ruling out the first case of non-smoothness of $d$.

In the second case, $Df$ is not injective, so there exists some unit vector $w\in T_{y}\partial\Omega$ such that 
$0=Df(w,0) = w+A_{y}(w)$.  The triangle inequality gives for any $\gamma>0$
\begin{align*}
d(x+\delta w)
&\leq d(x-\gamma{\mathbf n}(y)+\delta w) + d(x-\gamma{\mathbf n}(y)+\delta w,x+\delta w)\\
&=d(x-\gamma{\mathbf n}(y)+\delta w)+\gamma.
\end{align*}
For each $\gamma\in(0,d(x))$, $d(x)-\gamma<\rho(y)$, so $d$ is smooth at $x-\gamma{\mathbf n}(y)$, and
\begin{align*}
d(x-\gamma{\mathbf n}(y)+\delta w) 
&= d(x-\gamma{\mathbf n}(y))+\frac{\delta^{2}}2 D^{2}d(w,w)+o(\delta^{2})\\
&=d(x)-\gamma+\frac{\delta^{2}}2 D^{2}d(w,w)+o(\delta^{2})
\end{align*}
as $\delta\to 0$.  Combining these two inequalities and noting that Equation \eqref{eq:D2d} gives
$D^{2}d\big|_{x-\gamma{\mathbf n}(y)}(w,w) = -\frac{1}{\gamma}$, we have
$$
d(x+\delta w)\leq d(x)-\frac{\delta^{2}}{ 2\gamma}+o(\delta^{2}).
$$
Taking $z=x+\delta w$ in Equation \eqref{eq:localg} gives
\begin{align*}
d(x)-\frac{\delta^{2}}{ 2\gamma}&=d(x+\delta w)+o(\delta^{2})\\
&\geq g(x+\delta w)+o(\delta^{2})\\
&= g(x) +\frac{\delta^{2}}{ 2}M(w,w)+o(\delta^{2})
\end{align*}
as $\delta\to 0$, and hence $M(w,w)\leq -\frac{1}{\gamma}$.  Taking $\gamma\to 0$ gives a contradiction.

Now we proceed to the estimate:  Define
$$
Z_{B}(x,t) = u(x,t)-\varphi\left(d(x),t\right)-\varepsilon(1+t).
$$
On the boundary of $\Omega$, $Z_{B}(x,t)<0$.  Also, by regularity of $u$ and the
initial condition for $\varphi$, $Z_{B}(x,t)<0$ everywhere on $\Omega$ for small $t$. 
Consider the first time $t_0>0$ and $x_0\in\Omega$ such that $Z_B(x_0,t_0)=0$, if one exists.

Note that $z\mapsto \varphi(z,t_0)$ is strictly increasing and $C^2$, and so has a $C^2$ inverse which we denote $\mu$.  Then $g(x) = \mu\left(u(x,t_0)-\varepsilon(1+t_0)\right)$ is $C^2$, and we have
$g\leq d$ on $\Omega$, but $g(x_0)=d(x_0)$.  The observations above imply that $d$ is smooth at $x_0$, and hence so is $Z_{B}$.  The spatial derivatives for $Z_B$ are:
\begin{gather*}
\pd {Z_B} {y^i}=\pd u{y^i} -\varphi' D_i d  \\
\pdd {Z_B}{y^i}{y^j}=\pdd u {y^i}{y^j}  - \varphi''D_idD_jd-\varphi' D_iD_j d.
\end{gather*}
Thus $Du(x_0) = \varphi' Dd(x_0)$, and 
$D_{i}D_{j}u(x_0) \leq \varphi''D_{i}d D_{j}d + \varphi' D_{i}D_{j}d$.  Also
\begin{align*}
0\leq \frac{\partial}{\partial t}Z_{B}(x_0,t_0) 
&= \frac{\partial u}{\partial t}-\frac{\partial\varphi}{\partial t}-\varepsilon\\
&<\left(\alpha(|Du|,t)\frac{u_{i}u_{j}}{ |Du|^{2}}+\beta(|Du|,t)\!\left(\delta_{ij}-\frac{u_{i}u_{j}}{ |Du|^{2}}\right)\right)D_{i}D_{j}u-\frac{\partial\varphi}{\partial t}\\
&\leq\alpha(\varphi',t)D_{i}dD_{j}d\left(\varphi''D_{i}dD_{j}d+\varphi'D_{i}D_{j}d\right)\\
&\quad\null+\beta(\varphi',t)\!\left(\delta_{ij}-D_{i}dD_{j}d\right)\!
\left(\varphi''D_{i}dD_{j}d+\varphi'D_{i}D_{j}d\right)
\!-\frac{\partial\varphi}{\partial t}\\
&=\alpha(\varphi',t)\varphi''+\beta(\varphi',t)\Delta d-\frac{\partial\varphi}{\partial t}\\
&\leq 0
\end{align*}
since $|Dd|=1$, $D_idD_{i}D_jd=0$, and $\Delta d = \sum_{i}D_iD_id\le0$.  The last is because (as in Lemma 14.17 of \cite{trudinger})
$\Delta d(x)=\sum_{i=1}^{n-1} \frac{-\kappa_i(y)}{1-\kappa_i(y) d(x)}$ where $\kappa_{1}(y),\dots,\kappa_{n-1}(y)$ are the principal curvatures of $\partial\Omega$ at $y$.  Since $d$ is smooth at $x_0$, $\kappa_i(y) d(x)<1$, and
\begin{equation*} 
  \sum_{i=1}^{n-1} \frac{-\kappa_i}{1-\kappa_i d}\,\le\, -\negthickspace\negthinspace\sum_{i=1}^{n-1} \kappa_i\,\le \,0,
\end{equation*}
by the mean-convexity of $\partial\Omega$.   This proves that $Z_B(x,t)\le 0$ for all $x$ and $t$ and all $\varepsilon>0$.  The remainder of the proof is exactly as for Theorem \ref{thm:Dirichletconvex}.\end{proof}

We conclude this section with a gradient estimate for solutions of the Dirichlet problem for the anisotropic mean curvature flow.  We consider solutions of \eqref{eq:AMCF} with coefficients given by  \eqref{eq:AMCFcoeff}, and assume that the functions $\bar F$ and $\bar m$ satisfy bounds as follows:  There exists $C>0$ such that for each $z$ in the unit sphere $S^{n}\subset \RR^{n+1}$, and all vectors $u$, $v$ and $w$ in $T_{z}S^{n}$,  
\begin{align}
\frac{1}{ C}\leq \bar F(z)&\leq C;\label{eq:Fbound}\\
\frac{1}{ C}\leq \bar m(z)&\leq C;\label{eq:mbound}\\
\frac{|v|^{2}}{ C}\leq D^{2}\bar F\big|_{z}(v,v)&\leq C|v|^{2};\label{eq:D2Fbound}\\
|D^{3}\bar F\big|_{z}(u,v,w)|&\leq C|u||v||w|.\label{eq:D3Fbound}
\end{align}
Note that conditions \eqref{eq:Fbound}--\eqref{eq:D3Fbound} are consequences of the assumptions on $F$ and $m$ made in the introduction (after Equation \eqref{eq:AMCFcoeff}).
We also assume that $\partial\Omega$ satisfies a lower curvature bound, so that the anisotropic
principal curvatures (defined below) satisfy $\kappa_{i}\geq -C_{1}$. 

\begin{theorem}\label{thm:AMCFDirichlet}
Let $u: \bar\Omega\times[0,T)\to[-M/2,M/2]$ be a regular solution of the anisotropic mean curvature flow
\eqref{eq:AMCF}--\eqref{eq:AMCFcoeff} with Dirichlet boundary condition $u(x,t)=0$ for $x\in\partial\Omega$, $t\geq 0$.  If $\partial\Omega$ is smooth and the anisotropic mean curvature of $\partial\Omega$ is non-negative, then 
$$
1+|Du(x,t)|^2\leq C\exp\left(\frac{M^2}{At}\right)
$$
for all $x\in\bar\Omega$ and $t>0$, where $C$ depends on $F$ (up to third derivatives) and the lower curvature bound $C_1$, and $A$ is as in Lemma \ref{lem:amcf}.
\end{theorem}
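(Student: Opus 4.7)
My plan is to adapt the structure of Theorem \ref{Theorem 4} to the anisotropic setting, combining a boundary comparison argument with the interior doubling-variable estimate of Theorem \ref{periodic higherdimensional theorem}, and using Lemma \ref{lem:amcf} to provide the required lower bound on ellipticity.

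First, I would fix the one-dimensional barrier: Lemma \ref{lem:amcf} shows that \eqref{defn of alpha} is satisfied by $\alpha(R,t)=A/(1+R^{2})$, with $A$ the constant of that lemma. The corresponding equation \eqref{conditions and equation for varphi} is $\varphi_{t}=A\varphi''/(1+(\varphi')^{2})$, which after time-rescaling is the curve-shortening equation of Theorem \ref{Theorem 2}. I would take $\varphi(z,t)=M\tilde\varphi(z/M,At/M^{2})$ with $\tilde\varphi$ the profile from Theorem \ref{Theorem 2}, so that $\varphi(\cdot,0)\geq M/2$ serves as a modulus of continuity for any $u$ with $\osc u\leq M$, and the sharp one-dimensional estimate gives $1+(\varphi'(0,t))^{2}\leq\exp(M^{2}/(At))$.

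The new step, replacing the Euclidean-distance argument of Theorem \ref{Theorem 4}, is a boundary comparison built from an \emph{anisotropic} distance $d_{F}$ to $\partial\Omega$, measured in the Finsler metric whose unit ball is the Wulff shape associated to $F$. Using \eqref{eq:Fbound}--\eqref{eq:D3Fbound} and the lower curvature bound $C_{1}$, standard Finsler-geometric arguments give $C^{3}$-regularity of $d_{F}$ on a one-sided tubular neighbourhood $\{0<d_{F}<\rho_{0}\}$ with $\rho_{0}=\rho_{0}(C,C_{1})>0$, together with $F(Dd_{F})\equiv 1$ and an expression for $D^{2}d_{F}$ in terms of the anisotropic Weingarten map of $\partial\Omega$ analogous to \eqref{eq:D2d}. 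Under the non-negative anisotropic mean curvature hypothesis, the trace $a^{ij}(\varphi'Dd_{F})\,D_{i}D_{j}d_{F}$ is then non-positive on $\{d_{F}<\rho_{0}/2\}$. Setting $Z_{B}(x,t)=u(x,t)-\varphi(d_{F}(x),t)-\varepsilon(1+t)$, the homogeneity identities \eqref{eq:DFhom0}--\eqref{eq:D2Fhom-1}, combined with $F(Dd_{F})\equiv 1$ and the first-order condition $Du=\varphi'Dd_{F}$ at a hypothetical first interior zero, reduce $a^{ij}(\varphi'Dd_{F})D_{i}d_{F}D_{j}d_{F}\,\varphi''$ to $\alpha(\varphi',t)\varphi''$ up to a constant depending only on $F$, and a contradiction with $\varphi_{t}=\alpha(\varphi',t)\varphi''$ follows. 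Hence $|u(x,t)|\leq\varphi(d_{F}(x),t)$ on the tubular neighbourhood; elsewhere $|u|\leq M/2\leq\varphi$ trivially.

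With this boundary estimate, the doubling argument proceeds as in Theorem \ref{thm:Dirichletconvex}: define $Z(x,y,t)=u(y,t)-u(x,t)-2\varphi(|y-x|/2,t)-\varepsilon(1+t)$ on $\bar\Omega\times\bar\Omega\times[0,T]$, exclude a first zero with $y\in\partial\Omega$ using the boundary estimate together with the monotonicity and concavity of $\varphi(\cdot,t)$, and in the interior apply the argument of Theorem \ref{periodic higherdimensional theorem} verbatim. Sending $\varepsilon\to 0$ and evaluating the slope of $\varphi$ at zero gives the stated bound, with $C$ absorbing the constants from the comparison between $d_{F}$ and Euclidean distance. The main obstacle is the quantitative Finsler-geometric construction: one must establish $\rho_{0}>0$ and the supersolution property of $\varphi\circ d_{F}$ with constants depending only on $C$ and $C_{1}$, so that the dependence of $C$ in the stated estimate matches the claim. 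Once this is in place, the remainder of the argument is an essentially verbatim anisotropic transcription of Theorem \ref{Theorem 4}.
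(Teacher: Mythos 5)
Your overall architecture (anisotropic distance to the boundary, boundary comparison, then the doubling argument of Theorem \ref{periodic higherdimensional theorem}) matches the paper's proof, but there is a genuine gap at the crux of the boundary estimate: your claim that non-negative anisotropic mean curvature makes the trace term $a^{ij}(\varphi'Dd_F)\,D_iD_jd_F$ non-positive is not correct in general. The anisotropic Weingarten map and mean curvature of the level sets of $d_F$ are defined via the horizontal restriction $\tilde F(p)=\bar F(p,0)$ and the metric $D^2F^*\big|_{\mathbf n}$, whereas the coefficients $a^{ij}(\varphi'p)=\bar m\,\bar F\,D^2\bar F$ of \eqref{eq:AMCFcoeff} evaluated at the gradient $\varphi'p$ correspond, by homogeneity, to $\bar F$ at the tilted point $(p,-1/\varphi')$. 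These two quadratic forms do not coincide; the discrepancy in each principal direction is of size $C/\varphi'$ (controlled precisely by the third-derivative bound \eqref{eq:D3Fbound}), and after multiplying by $\varphi'$ and the curvature factors $\kappa_i/(1-d\kappa_i)$, the hypothesis $\sum_i\kappa_i\geq 0$ only absorbs the leading part: the directions with $\kappa_i<0$ leave a residual of order $2C^2C_1(n-2)$, a positive constant depending on the lower curvature bound $C_1$, and this only after imposing $\varphi'\geq C$. This is exactly why the theorem's constant depends on third derivatives of $F$ and on $C_1$, a dependence your argument never actually produces.

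As a consequence, your choice of barrier --- the exactly rescaled curve-shortening profile solving $\varphi_t=A\varphi''/(1+(\varphi')^2)$ --- cannot close the boundary comparison: an exact solution has no room to absorb a positive constant error, so no contradiction follows at the first zero of $Z_B$. The paper instead requires $\varphi$ to be a supersolution of the inhomogeneous equation \eqref{eq:super1}, i.e.\ $\varphi_t\geq A\varphi''/(1+(\varphi')^2)+B$ with $\varphi'\geq C$, where $B$ depends on $C_1$ and the bounds on $F$, and simultaneously a supersolution of \eqref{eq:super2} for the interior doubling step; such a $\varphi$ is built from translating solutions as in Theorem 7 of \cite{ac:gradest1D}, and this construction is what yields the stated estimate with its constants. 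A secondary (fixable) imprecision is that you restrict the boundary comparison to a tubular neighbourhood $\{d_F<\rho_0/2\}$ of quantitative width, whereas the argument of Theorem \ref{Theorem 4}, which the paper transcribes, avoids any such quantitative regularity radius by showing that at a first zero of $Z_B$ the function $d_F$ is automatically touched from below by a $C^2$ function and hence smooth there; but the essential missing idea is the $O(1/\varphi')$ coefficient mismatch and the resulting need for the modified barrier.
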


The precise definition of anisotropic mean curvature is given below.

\begin{proof}
We begin with construction of a suitable function to replace the distance to the boundary as used in Theorem \ref{Theorem 4}.   This is done using the distance defined by a Finsler metric as follows:  Let $\tilde F$ be the restriction of $\bar F$ to $(\RR^{n})^{*}$, so that $\tilde F(p)=\bar F(p,0)$.  Then $\tilde F$ is homogeneous of degree one, smooth and positive away from the origin, and strictly convex in non-radial directions.  There is a natural dual function defined on $\RR^{n}$ by
$$
F^{*}(v)=\sup\{v(p):\ \tilde F(p)\leq 1\}.
$$
Then $F^{*}$ is also homogeneous of degree one, smooth and positive away from the origin, and strictly convex in non-radial directions.  In particular this defines a (possibly non-symmetric) distance function, and a natural generalisation of the distance to the boundary, given by 
$$
d(x) =\inf\{ F^{*}(x-y):\ y\in\partial\Omega\}.
$$
The duality between $\tilde F$ and $F^{*}$ produces natural maps between $\{F^{*}=1\}\subset \RR^{n}$ and $\{\tilde F=1\}\subset(\RR^{n})^{*}$:  For ${\mathbf n}\in\RR^{n}\backslash\{0\}$, define
$$
p({\mathbf n}) = DF^{*}\big|_{{\mathbf n}}.
$$
This map is homogeneous of degree zero, and maps $\{F^{*}=1\}$ diffeomorphically to 
$\{\tilde F=1\}$.  The inverse map is given by
$$
{\mathbf n}(p) = D\tilde F\big|_{p}
$$
for $p\in(\RR^{n})^{*}\backslash\{0\}$.

The properties of $d$ are very closely analogous to those of the usual distance to the boundary.  There is a smooth map ${\mathbf n}$ from $\partial\Omega$ to $\{F^{*}=1\}$ given by the requirement that $p({\mathbf n}(y))(w)=0$ for $w\in T_{y}\partial\Omega$, and that ${\mathbf n}(y)$ points into $\Omega$ at $y$.  From this we construct a smooth map
$f: \partial\Omega\times\RR\to\RR^{n}$ by $f(y,s)=y+s{\mathbf n}(y)$.  As in the isotropic case, for each $y\in\partial\Omega$ there exists $\rho(y)>0$ such that $d(f(y,s))=s$ for $0\leq s\leq\rho(s)$, and the map $f$ is injective and has injective derivative on ${\mathcal O}=\{(y,s):\ 0<s<\rho(y)\}$, and surjective (onto $\Omega$) on $\bar{\mathcal O}=\{(y,s):\ 0<s\leq\rho(s)\}$.  The derivative of ${\mathbf n}$ defines the anisotropic curvature $A_{y}$ at $y$ by the formula 
$$
D{\mathbf n}\big|_{y}(w)=A_{y}(w).
$$
The anisotropic curvature $A_{y}$ is a linear map from $T_{y}\partial\Omega$ to itself, which is symmetric with respect to the anisotropic metric tensor $g=D^{2}F^{*}\big|_{{\mathbf n}(y)}$.  The eigenvalues $\kappa_{1},\dots,\kappa_{n-1}$ of $A_{y}$ are the anisotropic principal curvatures, and their sum (the trace of $A_{y}$) is the anisotropic mean curvature of $\partial\Omega$.

The derivative of $f$ in directions tangent to $\partial\Omega$ is given by
$$
Df\big|_{y,s}(w,0)=(I+sA_{y})(w),
$$
and therefore in $f({\mathcal O})$ we have $Dd\big|_{f(y,s)}({\mathbf n}(y))=1$ and
$Dd\big|_{f(y,s)}(w)=0$ for $w\in T_{y}\partial\Omega$, while
\begin{equation}\label{eq:D2dFn}
D^{2}d\big|_{f(y,s)}({\mathbf n}(y),u)=0
\end{equation}
for any $u$, and
\begin{equation}\label{eq:D2dFt}
D^{2}d\big|_{f(y,s)}(w,w)=-g(A_{y}(w),(I+sA_{y})^{-1}(w))
\end{equation}
for all $w\in T_{y}\partial\Omega$.
The anisotropic mean curvature of a level set of $d$ is given by the trace with respect to $g$, which is 
$$
\sum_{i=1}^{n-1}\frac{\kappa_{i}}{1-s\kappa_{i}},
$$
exactly as in the isotropic case.  In particular, if $\partial\Omega$ has non-negative anisotropic mean curvature, then so does the level set of $d$ through each point of $f({\mathcal O})$.  Note also that
the same argument as in the isotropic case shows that if there exists a smooth function $g$ on $\Omega$ with $g\leq d$ on $\Omega$ but $g(x)=d(x)$ for some $x\in\Omega$, then $d$ is smooth at $\Omega$, there exists a unique $y\in\partial\Omega$ with $x=f(y,d(x))$, and $d(x)<\rho(x)\leq \frac{1}{\max_{i}\kappa_{i}(y)}$.

Now we attempt to proceed exactly as in Theorem \ref{Theorem 4}.  There is one further complication to overcome, which arises purely in obtaining the boundary estimate.  Define $Z_{B}:\bar\Omega\times(0,T)\to\RR$ by
$$
Z_{B}(x,t)=u(x,t)-2\varphi\left(\frac{d(x)}{2},t\right)-\varepsilon(1+t).
$$
We will choose $\varphi$ to be a supersolution of a suitable one-dimensional parabolic equation, with $\varphi(z,t)\geq \frac{M}{ 2}$ as $t\to 0$ for any $z>0$, and $\varphi(0,t)=0$.  In particular,  at each time $t>0$, $\varphi(.,t)$ will be strictly increasing and concave.  If the maximum of $Z_{B}$ at time $t$ is  non-negative, then it is attained at a point $x$ in $\Omega$ at which $d$ is smooth, and there is a unique $y\in\partial\Omega$ such that $f(y,d(x))=x$.  Then the first and second spatial derivatives are given by
$$
DZ_{B}= Du(x,t)-\varphi' Dd(x);
$$
so that $Du(x,t) =\varphi' p$, where $p=DF^{*}\big|_{{\mathbf n}(y)}\in\{\tilde F=1\}$, and
$$
D_{i}D_{j}Z_{B}=D_{i}D_{j}u(x,t)-\frac12\varphi'' D_{i}d D_{j}d - \varphi' D_{i}D_{j}d,
$$
so that $D_iD_ju(x,t)\leq \varphi'' p_{i}p_{j}
+\varphi' D_{i}D_{j}d$.  The time derivative of $Z_{B}$ at the maximum point is given by
\begin{align}
\frac{\partial Z_{B}}{\partial t}(x,t)&=a^{ij}(\varphi'p)D_{i}D_{j}u(x,t)
-2\frac{\partial\varphi}{\partial t}-\varepsilon\notag\\
&< \frac12a^{ij}(\varphi'p)p_{i}p_{j}\varphi''
+\varphi'a^{ij}(\varphi'p)D_{i}D_{j}d(x)-2\frac{\partial\varphi}{\partial t}\label{eq:amcfdtbd}
\end{align}
Lemma \ref{lem:amcf} gives $a^{ij}(\varphi'p)p_{i}p_{j}\geq \frac{A}{ 1+(\varphi')^{2}}$, so (since $\varphi''\leq 0$) we have
$$
a^{ij}(\varphi'p)p_{i}p_{j}\varphi''\leq \frac{A\varphi''}{ 1+(\varphi')^{2}}.
$$
To control the second term in \eqref{eq:amcfdtbd}, we note that $D^{2}d({\mathbf n}(y),.)=0$, and choose coordinates such that
$e_{1},\dots,e_{n-1}$ span $T_{y}\partial \Omega = T_{{\mathbf n}(y)}\{F^{*}=1\}$ and are eigenvectors of $A_{y}$, and $e_{n}={\mathbf n}(y)=D\tilde F\big|_{p}$, orthonormal with respect to $g=D^{2}F^{*}\big|_{{\mathbf  n}(y)}$.  The dual basis $\{\phi^{1},\dots,\phi^{n}\}$ is then as follows:  
$\phi^{n}=p({\mathbf n}(y))=DF^{*}\big|_{{\mathbf n}(y)}=p$, while $\phi^{1},\dots,\phi^{n-1}$ are a basis for $T_{p}\{\tilde F=1\}$.  
In these coordinates we have for $i=1,\dots,n$
$$
D_{n}D_{i}d = D^{2}d({\mathbf n}(y),e_{i})=0
$$
by Equation \eqref{eq:D2dFn}.  Equation \eqref{eq:D2dFt} gives the following precise expression for  the remaining components of $D^{2}d$:
$$
D_{i}D_{i}d = -\frac{\kappa_{i}}{ 1-d\kappa_{i}}
$$
for $1\leq i\leq n-1$, and $D_{i}D_{j}d=0$ for $1\leq i<j\leq n-1$.  The term then becomes
$$
\varphi'a^{ij}(\varphi'p)D_{i}D_{j}d
= -\varphi'\bar m(\varphi' p,-1)\sum_{i=1}^{n-1}(\bar FD^{2}\bar F)\big|_{(p,-1/\varphi')}(\phi^{i},\phi^{i})\frac{\kappa_{i}}{ 1-d\kappa_{i}}.
$$

The orthonormality of the basis implies that 
$$
\delta^{ij}=D^{2}\tilde F\big|_{p}(\phi^{i},\phi^{j})=D^{2}\bar F\big|_{(p,0)}(\phi^{i},\phi^{i}).
$$
The bound of Equation \eqref{eq:D3Fbound} implies that
$$
\left|(\bar FD^{2}\bar F)\big|_{(p,-1/\varphi')}(\phi^{i},\phi^{i})-1\right|\leq\frac{C}{\varphi'}
$$
for each $i$.  This implies
\begin{align*}
\varphi'a^{ij}(\varphi'p)D_{i}D_{j}d
&\leq m\left(-\left(\varphi'-C\right)\sum_{\kappa_{i}>0}\frac{\kappa_{i}}{ 1-d\kappa_{i}}
-\left(\varphi'+C\right)\sum_{\kappa_{i}<0}\frac{\kappa_{i}}{ 1-d\kappa_{i}}\right)\\
&\leq m\left(-\left(\varphi'-C\right)\sum_{i=1}^{n-1}\frac{\kappa_{i}}{ 1-d\kappa_{i}}
-2C\sum_{\kappa_{i}<0}\frac{\kappa_{i}}{ 1-d\kappa_{i}}\right)\\
&\leq 2C^{2}C_{1}(n-2),
\end{align*}
provided $\varphi'\geq C$.  From this we see that 
$$
\frac{\partial Z_{B}}{\partial t}\leq \frac{A}{2(1+(\varphi')^{2})}\varphi''+2(n-2)C^{2}C_{1}-2\varphi_{t},
$$
and a gradient bound follows provided we can find $\varphi$ such that the right-hand side is non-positive:  Precisely, we need
\begin{align*}
\varphi_{t}&\geq \frac{A}{4(1+(\varphi')^{2})}\varphi'' + (n-2)C^{2}C_{1};\\
\varphi(t,0)&=0,\quad t>0;\\
\varphi(z,t)&\geq \frac{M}{4},\quad t\to0,\ z>0;\\
\varphi'&\geq C;\\
\varphi(.,t)&\ \text{\rm increasing and concave, $t>0$.}
\end{align*}
It suffices to choose $\varphi$ to be a supersolution for the equation
\begin{equation}\label{eq:super1}
\varphi_{t}=\frac{A\varphi''}{ 1+(\varphi')^{2}}+B
\end{equation}
with $\varphi'\geq C$, where $C$ depend on the equation only, and $B$ depends on the equation and on the lower curvature bound $C_1$, and $A$ is as in Lemma \ref{lem:amcf}.

This establishes the estimate near the boundary, and the remainder of the proof proceeds exactly as in Theorem \ref{periodic higherdimensional theorem}, by considering the function
$$
Z(y,x,t)=u(y,t)-u(x,t)-2\varphi\left(\frac{|y-x|}{2},t\right)-\varepsilon(1+t),
$$
where $\varphi$ satisfies
\begin{equation}\label{eq:super2}
\frac{\partial\varphi}{\partial t}\geq \frac{A\varphi''}{1+(\varphi')^2}.
\end{equation}
A function $\varphi$ which is a supersolution for both \eqref{eq:super1} and \eqref{eq:super2} can be constructed using translating solutions as in Theorem 7 of \cite{ac:gradest1D}, yielding the explicit gradient estimate.
\end{proof}

The use of more sophisticated barriers at the boundary produces estimates for more general
Dirichlet problems.  In particular, for the (anisotropic) mean curvature flow on an (anisotropic) mean-convex domain one can allow arbitrary prescribed $C^{1,1}$ boundary
data, and gradient estimates will follow depending on the $C^{1,1}$
norm of the boundary data as well as the oscillation of the solution.
However, examples show that there
are no gradient estimates which hold up to the boundary if the boundary data is only Lipschitz.  Similarly, the condition of mean-convexity cannot be relaxed. 
    
\section{ Applications}\label{sec:app}

In this section we give some applications of the estimates derived above
to prove existence for solutions of initial and boundary value problems with highly singular
initial data.  

In the case of the graphical mean curvature flow, the interior estimates of Ecker and Huisken \cite{eh:interior} yielded the existence of entire solutions for any locally Lipschitz initial data.  Angenent \cite{ang:mcf} extended this argument using the estimates of Evans and Spruck \cite{es:gradest} to yield existence of entire solutions for any continuous initial data.

We will provide a somewhat weaker result for the entire problem for anisotropic mean curvature flows, as well as results for periodic initial data, and homogeneous Neumann and Dirichlet problems.

\subsection{Periodic initial value problems}

We consider equations of the form \eqref{evolution equation}, where $a^{ij}$ is positive definite and locally Lipschitz continuous, and $b$ is locally H\"older continuous on $(\RR^{n})^{*}\times[0,\infty)$, and assume that there exists a non-negative function $\varphi$ which is continuous on $[0,\infty)\times[0,\infty)\setminus\{(0,0)\}$ and nondecreasing and concave in the first argument, which is regular on $[0,\infty)\times(0,\infty)$ and satisfies Equation \eqref{conditions and equation for varphi} with $\alpha$ defined by
\eqref{defn of alpha} and initial condition $\psi$. 

\begin{theorem}\label{thm:existperiodic}
Let $u_{0}$ be a $\Gamma$-periodic continuous function on $\RR^{n}$ with modulus of continuity $\psi$.  Then there exists a unique $\Gamma$-periodic $u$ which is continuous on $\RR^n\times[0,\infty)$, regular in $\RR^{n}\times(0,\infty))$, satisfies Equation \eqref{evolution equation} for $t>0$, and has
$u(x,0)=u_{0}(x)$ for all $x\in\RR^{n}$.
\end{theorem}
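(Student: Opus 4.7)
The plan is to approximate the continuous initial datum by smooth ones, apply Theorem \ref{periodic higherdimensional theorem} for a uniform spatial modulus of continuity, compactify for $t>0$ using parabolic regularity, and then use classical barriers against the approximations to pin down the behaviour at $t=0$. First I would set $u_0^{(k)}=\rho_{1/k}*u_0$ for a smooth symmetric probability mollifier $\rho_{1/k}$ of radius $1/k$; each $u_0^{(k)}$ is smooth, $\Gamma$-periodic, converges uniformly to $u_0$, and inherits $\psi$ as a modulus of continuity, since
$$
|u_0^{(k)}(x)-u_0^{(k)}(y)|\leq\int\rho_{1/k}(z)\,|u_0(x-z)-u_0(y-z)|\,dz\leq 2\psi(|x-y|/2).
$$
Standard quasilinear parabolic theory for smooth periodic data then produces a regular $\Gamma$-periodic solution $u^{(k)}$ with $u^{(k)}(\cdot,0)=u_0^{(k)}$ on a short time interval.

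By Theorem \ref{periodic higherdimensional theorem}, $\varphi(\cdot,t)$ is a modulus of continuity for each $u^{(k)}(\cdot,t)$ uniformly in $k$, which yields a uniform gradient bound $|Du^{(k)}(\cdot,t)|\leq\varphi'(0,t)$ on each compact subinterval of $(0,\infty)$. This a priori bound excludes blow-up and extends each $u^{(k)}$ to all of $[0,\infty)$. On any slab $\RR^n\times[\tau,T]$ with $0<\tau<T$, the equation is uniformly parabolic with locally Lipschitz coefficients and bounded gradient, so linear Schauder bootstrap delivers uniform $C^{2+\alpha,1+\alpha/2}$ estimates on compact subsets. Arzel\`a--Ascoli extracts a subsequence converging in $C^{2,1}_{\mathrm{loc}}(\RR^n\times(0,\infty))$ to a regular $\Gamma$-periodic $u$ solving \eqref{evolution equation}, and the modulus of continuity estimate passes to the limit.

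The main obstacle is the behaviour as $t\downarrow 0$, since the modulus of continuity controls only spatial oscillations at each positive time and says nothing directly about convergence $u(\cdot,t)\to u_0$. I would resolve this using classical parabolic barriers applied to the approximations $u^{(k)}$, which are smooth up to $t=0$. Fix $x_0\in\RR^n$ and $\eta>0$; choose $\delta>0$ with $2\psi(\delta/2)<\eta/2$, let $K$ be a uniform $L^\infty$ bound for $\{u^{(k)}\}$, and on $B_\delta(x_0)\times[0,1]$ set
$$
V^\pm_k(x,t)=u_0^{(k)}(x_0)\pm\left(\eta+\frac{2K}{\delta^2}|x-x_0|^2+Ct\right).
$$
Since $|DV^\pm_k|\leq 4K/\delta$ on this cylinder, one can pick $C$ independently of $k$, depending only on the $L^\infty$ bounds of $a^{ij}$ and $b$ over $\{|p|\leq 4K/\delta\}$, so that $V^+_k$ is a classical supersolution and $V^-_k$ a classical subsolution of \eqref{evolution equation}. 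The inequalities $V^-_k\leq u^{(k)}\leq V^+_k$ on the parabolic boundary follow from $\psi$ being a modulus of continuity for $u_0^{(k)}$ at the bottom and from the choice of $K$ on the sides; the classical comparison principle then yields
$$
|u^{(k)}(x,t)-u_0^{(k)}(x_0)|\leq\eta+\frac{2K}{\delta^2}|x-x_0|^2+Ct
$$
throughout the cylinder. Sending $k\to\infty$ and then $(x,t)\to(x_0,0)$ gives $\limsup|u(x,t)-u_0(x_0)|\leq\eta$, and arbitrariness of $\eta$ produces continuity of $u$ at $(x_0,0)$ with $u(x_0,0)=u_0(x_0)$.

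Uniqueness is a maximum principle argument exploiting that at points where $Dw=0$ the Lipschitz and H\"older nonlinearities cancel. If $u,v$ are two solutions, $w=u-v$ is $\Gamma$-periodic and continuous on $\RR^n\times[0,\infty)$ with $w(\cdot,0)=0$. For $\epsilon>0$ set $\tilde w_\epsilon=w-\epsilon(1+t)$ and pick $\tau>0$ with $\|w(\cdot,\tau)\|_\infty<\epsilon$, so $\tilde w_\epsilon|_{t=\tau}<0$. A positive maximum of $\tilde w_\epsilon$ on the (modulo $\Gamma$) compact set $\RR^n\times[\tau,T]$ would occur at an interior point or at $t=T$; at such a point $Dw=0$, so $Du=Dv$ and both $a^{ij}$ and $b$ agree, while $D^2w\leq 0$, whence $w_t=a^{ij}(Du,t)D_iD_jw\leq 0$, contradicting $w_t\geq\epsilon>0$. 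Therefore $\tilde w_\epsilon\leq 0$ on $[\tau,T]$; letting $\epsilon\to 0$ and applying the symmetric argument to $-w$ gives $w\equiv 0$.
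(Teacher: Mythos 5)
Your proposal is correct and follows the same overall architecture as the paper: mollify the initial data (which preserves the modulus of continuity), obtain estimates independent of the approximation parameter from Theorem \ref{periodic higherdimensional theorem} together with the H\"older-gradient and Schauder estimates for $t\geq t_0>0$, extract a limit in $C^{2,1}_{\mathrm{loc}}(\RR^n\times(0,\infty))$, and control $t=0$ and uniqueness by comparison arguments. Where you genuinely diverge is the attainment of the initial data: the paper constructs supersolutions \eqref{eq:supersoln} from a self-similar ODE profile $\phi$, which yields a quantitative in-time modulus, uniform in the spatial point and in the approximation index, and hence uniform convergence $u(\cdot,t)\to u_0$; you instead use local quadratic-in-space barriers $u_0^{(k)}(x_0)\pm\bigl(\eta+2K\delta^{-2}|x-x_0|^2+Ct\bigr)$ on small cylinders, which is more elementary, needs only local boundedness of $a^{ij}$ and $b$ on compact gradient sets, and suffices for continuity at $t=0$ (upgradable to uniform convergence by periodicity and uniform continuity of $u_0$), at the cost of an explicit rate. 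You also write out the uniqueness argument that the paper calls trivial, and your version (at a positive maximum of $u-v-\epsilon(1+t)$ the gradients coincide, so both $a^{ij}$ and $b$ cancel and $w_t\leq a^{ij}D_iD_jw\leq 0$) is sound. Two small repairs are needed: first, in this paper a modulus of continuity $\psi$ is only assumed positive and concave, so $\psi(z)$ need not tend to $0$ as $z\to 0$ and your choice ``$2\psi(\delta/2)<\eta/2$'' can be impossible (e.g.\ constant $\psi$); since $u_0$ is continuous and $\Gamma$-periodic it is uniformly continuous, so the barrier argument should be run with the intrinsic modulus of $u_0$, exactly as the paper does by constructing $\omega(z)=\inf\{\varepsilon+v(\varepsilon)z\}$. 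Second, the uniform $L^\infty$ bound $K$ for $\{u^{(k)}\}$ on $\RR^n\times[0,1]$ should be justified (a one-line maximum-principle argument at spatial extrema, where $Du^{(k)}=0$, bounds the drift by $\int_0^t|b(0,s)|\,ds$). With these adjustments your proof is complete and essentially equivalent in strength to the paper's, minus the explicit uniform rate at $t=0$.
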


In particular, $\varphi$ exists for any bounded $\psi$ if $a^{ij}(p,t)v_{i}v_{j}\geq C|p|^{-2}|v|^{2}$ for $|p|$ large, or for $\psi(z)\leq Kz^\beta$ if $a^{ij}(p,t)v_iv_j\geq C|p|^{-\gamma}|v|^2$ with $\gamma<\frac{2}{1-\beta}$ for $|p|$ large (compare section 6 of \cite{ac:gradest1D}).

\begin{proof}
We will produce a solution as a limit of a sequence of solutions with smooth initial data given by mollifications:
$$
u_{0,n}(x) = \int_{\RR^{n}}\rho(z) u_{0}(x+z/n)dz^{n},
$$ 
where $\rho$ is a smooth, non-negative, rotationally symmetric function with compact support in the unit ball and with 
$\int_{\RR^{n}}\rho(z) dz^{n}=1$.   Note that $u_{0,n}$ has modulus of continuity $\psi$ for every $n$.

For each $n$ there exists a regular solution $u_{n}$ of \eqref{evolution equation}, by a straightforward modification of the proof of Theorem 12.16 in \cite{lieb:book}.  By Theorem 
\ref{periodic higherdimensional theorem} we have $|u_n(z,t)|\leq C(\psi,t_0)$ any each set $\RR^n\times[t_0,\infty)$.  In particular, on each of these sets the equation is uniformly parabolic.  Theorem 12.3 of \cite{lieb:book} provides a H\"older estimate on the first spatial derivative on each of these sets, and then a Schauder estimate (such as in Theorem 4.9 of \cite{lieb:book} provides a $C^{2,\alpha}$ estimate.  All of these estimates are independent of $n$.

A subsequence converges in $C_{\tx{loc}}^{2,\beta}((\RR^{n})\times(0,\infty))$ to a limit $u$ which is a solution of the equation.  We prove that $u(.,t)\to u_0$ uniformly as $t\to 0$.

Since $u_0$ is uniformly continuous, there exists a non-increasing function $v$ such that
$|u_0(y)-u_0(x)|\leq \varepsilon+v(\varepsilon)|y-x|$ for all $x$ and $y$, for any $\varepsilon>0$.
The functions $u_{0,n}$ also satisfy
$|u_{0,n}(y)-u_{0,n}(x)|\leq \varepsilon+v(\varepsilon)|y-x|$ for all $\varepsilon>0$ and $x,y\in\RR^n$.
Define $\omega(z)=\inf\{\varepsilon+v(\varepsilon)z:\ \varepsilon>0\}$.  Then $\omega$ is a concave modulus of continuity for $u_0$ (hence also for $u_{0,n}$) with $\omega(z)\to 0$ as $z\to 0$.

The modulus of continuity $\omega$ also controls how far $u_{0,n}$ varies from $u_{0}$:
\begin{align}
|u_{0,n}(x) -u_{0}(x)| &= \left|\int_{\RR^{n}}\rho(z)\left(u_{0}(x+z/n)-u_{0}(x)\right)dz^{n}\right|\notag\\
&\leq \int_{\RR^{n}}\rho(z)\left|u_{0}(x+z/n)-u_{0}(x)\right|dz^{n}\notag\\
&\leq \int_{\RR^{n}}\rho(z)\omega(|z|/n)dz^{n}\notag\\
&\leq \omega(1/n).\label{eq:mollifnear}
\end{align}

By continuity of $a$ and $b$, given $K>0$ there exist $\Lambda_{K}$ and $\mu_{K}$ such that 
$|b(p,t)|\leq \mu_{K}$ and $a^{ij}(p,t)\xi_{i}\xi_{j}\leq \Lambda_{K}|\xi|^{2}$
 for all $\xi$,  if $|p|\leq K$ and $0\leq t\leq 1$.
 
 Let $\phi:\ [0,\infty)\to\RR$ be the unique solution of
 $$
 \phi''(z)+(n-1)\frac{\phi'(z)}{|z|} = \frac12\left(\phi(z) -z\phi'(z)\right)
 $$
 with $\phi'(0)=0$ and $\phi'(z)\to 1$ as $z\to\infty$.  Then 
 for each $a>0$ we can construct a supersolution of \eqref{evolution equation} by setting
 \begin{equation}\label{eq:supersoln}
w_{a}(x,t) = u_{0,n}(x_{0}) + a + \mu_{v(a)}t + v(a)\sqrt{\Lambda_{v(a)}t}\phi\left(
\frac{|x-x_{0}|}{\sqrt{\Lambda_{v(a)}t}}\right).
\end{equation}
Note that $w_{a}(x,t)\to u_{0,n}(x_{0})+a+v(a)|x-x_{0}|\geq u_{0,n}(x_{0})+\omega(|x-x_{0}|)\geq u_{0,n}(x)$ as $t\to 0$.  Therefore by comparison we have $w_{a}(x,t)\geq u_{n}(x,t)$ for each $t>0$ and $a>0$.  In particular,
$$
u_{n}(x_{0},t)-u_{0,n}(x_{0})\leq a + \mu_{v(a)}t + v(a)\sqrt{\Lambda_{v(a)}t}\phi(0).
$$
A similar subsolution gives a bound from below.
This establishes a uniform estimate on continuity in time for $u_{n}$:  Given $\varepsilon>0$, let 
$$
\delta(\varepsilon)=\max\left\{\frac{\varepsilon}{\mu_{v(\varepsilon)}},\frac{\varepsilon^{2}}{v(a)^{2}\Lambda_{v(a)}\phi(0)^{2}}\right\}.
$$  
Then for $0\leq t\leq \delta(\varepsilon)$ and all $x\in\RR^{n}$,
\begin{equation}\label{eq:timecont}
|u_{n}(x,t)-u_{0,n}(x)|\leq 3\varepsilon.
\end{equation}
Uniform convergence of $u(.,t)$ to $u_{0}$ follows, since for each $t>0$ and $n\in\NN$,
$$
|u(.,t)-u_{0}|_{\infty}\leq |u(.,t)-u_{n}(.,t)|_{\infty} + |u_{n}(.,t)-u_{n,0}|_{\infty}+|u_{n,0}-u_{0}|_{\infty}.
$$
The first term approaches zero as $n\to\infty$ along our subsequence, the second is bounded by \eqref{eq:timecont}, and the last approaches zero by \eqref{eq:mollifnear}, so that
$$
|u(x,t)-u_{0}(x)|\leq \varepsilon
$$
for $0\leq t\leq \delta(\varepsilon/3)$.  Uniqueness is trivial in this case.
\end{proof}


\subsection{The Neumann problem}

The result for the Neumann problem on a convex domain is almost identical to that for the periodic problem.  The assumptions on $a^{ij}$, $b$, $\psi$ and $\varphi$ are the same as for Theorem \ref{thm:existperiodic}.

\begin{theorem}\label{thm:existneumann}
Let $\Omega$ be a bounded convex domain with $C^{2,\alpha}$ boundary.
Let $u_{0}\in C^{0}(\bar\Omega)$.  Then there exists a unique $u$ which is a regular solution of \eqref{Neumann problem} in $\bar\Omega\times(0,\infty))$ and is continuous on $\bar\Omega\times[0,\infty)$ with $u(x,0)=u_{0}(x)$ for all $x\in\bar\Omega$.
\end{theorem}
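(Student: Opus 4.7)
The plan parallels the proof of Theorem \ref{thm:existperiodic}, with modifications to accommodate the Neumann boundary condition. The four main steps are: (1) approximate $u_0$ by smooth functions that are compatible with the homogeneous Neumann condition; (2) solve each approximate problem using standard parabolic theory; (3) pass to a limit using the modulus of continuity estimate of Theorem \ref{thm:Neumann} together with interior and boundary regularity; and (4) establish continuity at $t=0$ via the same radial barriers used in the periodic case, which turn out to be automatically admissible here because $\Omega$ is convex.

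For the approximation I would let $u_{0,n}$ be the value at time $1/n$ of the solution of the linear heat equation $\partial_t v = \Delta v$ on $\Omega$ with homogeneous Neumann boundary condition and initial data $u_0$. The parabolic smoothing gives $u_{0,n}\in C^\infty(\bar\Omega)$ with $D_{\mathbf n} u_{0,n}=0$ on $\partial\Omega$, and strong continuity of the Neumann heat semigroup on $C(\bar\Omega)$ gives $u_{0,n}\to u_0$ uniformly. Applying Theorem \ref{thm:Neumann} to the linear heat equation itself shows that $u_{0,n}$ inherits a concave modulus of continuity arbitrarily close to any concave modulus of continuity $\omega$ of $u_0$.

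For each $n$, standard parabolic theory (a straightforward modification of the quasilinear results in \cite{lieb:book}) yields a regular solution $u_n$ of \eqref{Neumann problem} with initial data $u_{0,n}$. Theorem \ref{thm:Neumann} then gives uniform (in $n$) modulus of continuity estimates, and hence uniform gradient bounds on each time slab $\bar\Omega \times [t_0,\infty)$. On such a slab the equation is uniformly parabolic; boundary Hölder estimates and Schauder theory up to the boundary yield uniform $C^{2,\beta}_{\tx{loc}}(\bar\Omega\times(0,\infty))$ bounds, and a subsequence converges in $C^{2,\beta}_{\tx{loc}}$ to a regular $u$ solving \eqref{Neumann problem}.

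The last step, uniform continuity at $t=0$, is where the geometry enters. I reuse the radial supersolution $w_a$ from \eqref{eq:supersoln} centered at an arbitrary $x_0\in\bar\Omega$: it remains an interior supersolution by the same calculation, and since $\phi'\geq 0$ and $\Omega$ is convex we have $(x-x_0)\cdot \mathbf{n}(x)\geq 0$ for $x\in\partial\Omega$, hence $D_{\mathbf n}w_a(x,t)\geq 0=D_{\mathbf n}u_n(x,t)$. The Neumann comparison principle therefore gives $u_n\leq w_a$ on $\bar\Omega\times[0,\infty)$, and a symmetric subsolution gives the lower bound, producing an estimate of the form $|u_n(x,t)-u_{0,n}(x)|\leq 3\varepsilon$ for $t\leq \delta(\varepsilon)$, uniformly in $n$. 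Combined with $u_{0,n}\to u_0$ uniformly this gives $u(\cdot,t)\to u_0$ uniformly as $t\to 0$. Uniqueness is immediate from the comparison principle for \eqref{Neumann problem}. The main obstacle I anticipate is producing the initial smoothing that is simultaneously smooth, Neumann-compatible, and carries a modulus of continuity close to $\omega$; the short-time Neumann heat semigroup accomplishes all three of these at once, which is what makes the convex case essentially as easy as the periodic case.
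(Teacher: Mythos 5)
Your proposal is correct and follows the paper's own strategy almost step for step: Neumann-compatible smooth approximations of $u_0$, existence for each approximation via \cite{lieb:book}, uniform-in-$n$ gradient bounds on time slabs $\bar\Omega\times[t_0,\infty)$ from Theorem \ref{thm:Neumann} followed by H\"older and Schauder estimates, compactness, and continuity at $t=0$ via the radial barriers \eqref{eq:supersoln}, which are admissible Neumann supersolutions precisely because convexity gives $(x-x_0)\cdot\mathbf{n}(x)\geq 0$ on $\partial\Omega$ (the paper records the same inequality with the inward normal, so the signs agree). The one place you genuinely differ is the construction of $u_{0,n}$: the paper builds it by hand, extending $u_0$ evenly across $\partial\Omega$ in normal coordinates, mollifying in boundary charts and patching with a partition of unity, whereas you run the linear Neumann heat semigroup for time $1/n$. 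Your variant is cleaner and does deliver the three properties needed: uniform convergence (strong continuity of the semigroup on $C^0(\bar\Omega)$), exact compatibility $D_{\mathbf n}u_{0,n}=0$, and near-preservation of any concave modulus of continuity $\omega$ of $u_0$ (a concave $\omega$, smoothed slightly, is a stationary supersolution of the one-dimensional heat equation, so Theorem \ref{thm:Neumann} applied to the heat flow on the convex $\Omega$ gives exactly your claim). The only small inaccuracy is the assertion $u_{0,n}\in C^\infty(\bar\Omega)$: with a boundary that is merely $C^{2,\alpha}$, parabolic smoothing yields regularity up to the boundary only of class $C^{2,\alpha'}$; this, together with the exact Neumann compatibility, is still all that the cited existence results of \cite{lieb:book} require, so the argument goes through unchanged.
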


\begin{proof}
The proof follows closely that for Theorem \ref{thm:existperiodic}.  

First construct a sequence of smooth initial data $u_{0,n}$ with $D_{{\mathbf n}}u_{0,n}=0$ on $\partial\Omega$, converging uniformly to $u_{0}$ as $n\to\infty$.   For example, such a sequence can be constructed as follows:  Choose an open cover of $\partial\Omega$ by open sets $U_{i}$ with diffeomorphisms $\psi_{i}:\ V_{i}\subset\RR^{n-1}\to U_{i}$.  Choose $r>0$ such that each point $x$ in $\Omega$ of distance less than $r$ from $\partial\Omega$ has a unique closest point $\pi(x)$ in $\partial\Omega$.  Let $W_{i} = \left\{x\in\bar\Omega:\ d(x,U_{i})=d(x,\partial\Omega)<r\right\}$.   For each $i$ there is a natural diffeomorphism $\tilde\psi_{i}$ from $Z_{i}=V_{i}\times[0,r)\to W_{i}$, given by $\tilde\psi_{i}(y,s) = \psi_{i}(y)+s{\mathbf n}(\psi_{i}(y))$, where ${\mathbf n}(x)$ is the inward-pointing unit normal to $\partial\Omega$ at $x$.  $\bar\Omega$ is covered by the collection of relatively open sets $\{\Omega,W_{1},\dots,\}$.  Choose a partition of unity subordinate to this cover, consisting of smooth functions $f_{0}$ with support in $\Omega$ and $f_{i}$ with support in each $W_{i}$.  Set $\delta = \min\{d(\tx{supp}f_{0},\partial\Omega),\ d(\tx{supp}f_{i}\circ\tilde\psi_{i},\partial Z_{i})\}$.  Then for $n>\delta^{-1}$ define 
$$
u_{0,n}(x)=f_{0}(x)\int_{\RR^{n}}\rho(z)u_{0}(x+z/n)+\sum_{i}f_{i}(x)\int_{\RR^{n}}\rho(z) \tilde u_{i}\left(\tilde\psi^{-1}_{i}(x)+z/n\right)
$$
where $\tilde u_{i}$ is the function defined on $V_{i}\times(-r,r)$ by $\tilde u_{i}(y,s)=u_{0}(\tilde\psi_{i}(y,|s|))$.  Then $u_{0,n}$ approaches $u_{0}$ uniformly on $\bar\Omega$,
and has zero normal derivative for each $n$.

For each $n$ the existence of a regular solution of Equation \eqref{Neumann problem} with initial conditions $u_{0,n}$ is guaranteed by \cite[Lemma 13.21 and Theorem 13.19]{lieb:book}.  Lipschitz bounds for these solutions, depending on $t$ but not on $n$, follow from Theorem \ref{thm:Neumann}.  H\"older estimates on first spatial derivatives are provided by \cite[Theorem 12.3 and Lemma 13.21]{lieb:book}.  Finally, the Schauder estimate of \cite[Theorem 4.31]{lieb:book} gives H\"older estimates on second spatial and first time derivatives, independent of $n$. 

It follows that a sequence converges to a regular limit $u$ which satisfies Equation \eqref{Neumann problem} on $\bar\Omega\times(0,\infty)$, and we argue exactly as in Theorem \ref{thm:existperiodic} to deduce that this converges uniformly to $u_{0}$ as $t\to 0$.  Indeed the barriers defined by \eqref{eq:supersoln} are supersolutions, since at any point $y\in\partial\Omega$ they satisfy 
$D_{{\mathbf n}(y)}w_{a}=v(a)\phi'\langle y-x_{0},{\mathbf n}(y)\rangle\leq 0$ by the convexity of the domain.
\end{proof}

\subsection{The Dirichlet problem}

We consider three cases for the Dirichlet problem:  In the case of a convex domain, an existence result holds in similar generality to Theorems \ref{thm:existperiodic} and \ref{thm:existneumann}.  In the case of `isotropic flows' as discussed in we also obtain an existence result on mean-convex domains, and finally we give an existence result for anisotropic mean curvature flows on domains with non-negative anisotropic mean curvature.
For the general case we assume the same conditions on $a$, $b$, $\psi$ and $\varphi$ as above.
 
\begin{theorem}\label{thm:existDirichlet1}
Let $\Omega$ be a bounded convex domain in $\RR^{n}$ with $C^{2+\alpha}$ boundary.
Let $u_{0}$ be a continuous function on $\bar\Omega$ which is zero on $\partial\Omega$.  Then there exists a unique $u$ which is continuous on $\bar\Omega\times[0,\infty)$ and is a regular solution of \eqref{eq:Dirichlet} on $\bar\Omega\times(0,\infty)$ with $u(x,0)=u_{0}(x)$ for all $x\in\bar\Omega$.
\end{theorem}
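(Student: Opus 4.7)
The strategy mirrors the approximation schemes of Theorems \ref{thm:existperiodic} and \ref{thm:existneumann}. First I would build a sequence $u_{0,n}\in C^\infty(\bar\Omega)$ with $u_{0,n}|_{\partial\Omega}\equiv 0$ converging uniformly to $u_0$: extend $u_0$ by odd reflection in the normal coordinate of the tubular neighbourhood maps $\tilde\psi_i$ from the proof of Theorem \ref{thm:existneumann}, mollify each piece, and recombine via the partition of unity. The oddness of the reflection forces every mollified piece to vanish on $\partial\Omega$, so $u_{0,n}|_{\partial\Omega}\equiv 0$ automatically. Uniform continuity of $u_0$ on the compact set $\bar\Omega$ then provides a single concave modulus of continuity $\omega$ for $\{u_0\}\cup\{u_{0,n}\}_{n}$ with $\omega(z)\to 0$ as $z\to 0$, together with $|u_{0,n}-u_0|_\infty\to 0$.

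For each $n$, existence of a regular solution $u_n$ of \eqref{eq:Dirichlet} with smooth compatible initial data $u_{0,n}$ is furnished by \cite[Lemma 13.21 and Theorem 13.19]{lieb:book}. Fix a $\varphi$ as hypothesised, corresponding to the initial modulus $\omega$. Theorem \ref{thm:Dirichletconvex} then yields, uniformly in $n$, the spatial modulus of continuity $|u_n(y,t)-u_n(x,t)|\leq 2\varphi(|y-x|/2,t)$ and the boundary bound $|u_n(x,t)|\leq\varphi(d(x),t)$ from \eqref{eq:bdryest}. For $t\geq t_0>0$ the former produces a uniform Lipschitz bound, so \eqref{eq:Dirichlet} is uniformly parabolic on each slab $\bar\Omega\times[t_0,T]$. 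Boundary H\"older estimates on $Du_n$ follow from \cite[Theorem 12.3 and Lemma 13.21]{lieb:book}, and the Schauder estimates of \cite[Theorem 4.31]{lieb:book} deliver $n$-independent $C^{2,\beta}$ bounds on compact subsets of $\bar\Omega\times(0,\infty)$. A diagonal subsequence converges in $C^{2,\beta}_{\tx{loc}}(\bar\Omega\times(0,\infty))$ to a regular solution $u$ of \eqref{eq:Dirichlet} that vanishes on $\partial\Omega\times(0,\infty)$.

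The step demanding the most care is uniform convergence $u(\cdot,t)\to u_0$ as $t\to 0^+$, since the interior barriers \eqref{eq:supersoln} used in Theorem \ref{thm:existperiodic} do not automatically respect the zero boundary condition. The remedy is to split $\bar\Omega$ at the level $d=\eta$. Given $\varepsilon>0$, pick $\eta$ so small that $\omega(\eta)<\varepsilon$, and then $\tau$ so small that $\varphi(\eta,t)<\varepsilon$ for $t\leq\tau$ (possible by continuity of $\varphi$ at $(0,0)$ together with $\varphi(0,t)=0$ for $t>0$ and $\varphi(z,0)=\omega(z)\to 0$ as $z\to 0$). In the boundary strip $\{d<\eta\}$ the estimate $|u_n(x,t)|\leq\varphi(d(x),t)<\varepsilon$ and the elementary bound $|u_{0,n}(x)|\leq\omega(d(x))<\varepsilon$ (obtained by comparing $u_{0,n}(x)$ with $u_{0,n}$ at a nearest boundary point, using the common modulus $\omega$) combine to give $|u_n(x,t)-u_{0,n}(x)|<2\varepsilon$. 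Away from the strip, the radial supersolution \eqref{eq:supersoln} centred at each $x_0$ with $d(x_0)\geq\eta$, together with its lower counterpart, dominates $u_n$ throughout $\bar\Omega\times[0,\delta]$ once $\delta=\delta(\varepsilon,\eta)$ is small and $a$ is chosen so that the linear growth $v(a)\eta$ exceeds $\sup|u_{0,n}|+a$ on $\partial\Omega$ (achievable since $v(a)\to\infty$ as $a\to 0$ unless $u_0$ is Lipschitz, in which case one may replace the barrier by the corresponding Lipschitz one), yielding $|u_n(x,t)-u_{0,n}(x)|\leq 3\varepsilon$ there. Combining the two regions and letting $n\to\infty$ gives $|u(\cdot,t)-u_0|_\infty\to 0$ as $t\to 0$. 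Uniqueness is immediate from the comparison principle applied to the difference of any two regular solutions of \eqref{eq:Dirichlet}.
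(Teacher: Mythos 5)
Your overall strategy---odd reflection and mollification to produce compatible smooth data, Lieberman's existence theory for each approximation, uniform Lipschitz and Schauder bounds for positive times via Theorem~\ref{thm:Dirichletconvex}, diagonal compactness, and a barrier argument for uniform convergence as $t\to 0$---is the paper's strategy, and the proof is essentially correct. The one genuine departure is in your treatment of the $t\to 0$ limit, and it rests on a premise the paper shows to be unnecessary. You assert that ``the interior barriers \eqref{eq:supersoln} do not automatically respect the zero boundary condition'' and therefore split $\bar\Omega$ at the level $\{d=\eta\}$, using the boundary estimate \eqref{eq:bdryest} in the strip and constraining $a$ by $v(a)\eta\geq\sup|u_{0,n}|+a$ in the interior. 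In fact the barriers \emph{do} respect the boundary condition, for every $a>0$ and every interior centre $x_0$, without any such constraint: since $u_{0,n}$ vanishes on $\partial\Omega$ and $\omega$ is a modulus of continuity for it, comparing $x_0$ with its nearest boundary point gives $|u_{0,n}(x_0)|\leq\omega(d(x_0))\leq\omega(|x-x_0|)\leq a+v(a)|x-x_0|$ for any $x\in\partial\Omega$, and hence $w_a(x,t)\geq u_{0,n}(x_0)+a+v(a)|x-x_0|\geq 0$ on $\partial\Omega$. This is precisely the one-line observation the paper makes (``the same barriers are supersolutions since the definition of the modulus of continuity $\omega$ implies that the supersolutions constructed are non-negative on $\partial\Omega$''), after which the $t\to 0$ convergence is \emph{identical} to the periodic and Neumann cases with no strip decomposition, no coupling of $a$ to $\eta$, and no separate treatment of Lipschitz versus non-Lipschitz data. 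Your two-region argument does reach the conclusion, so this is a correct but roundabout route; the paper's observation is the cleaner one and is worth internalising.

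A secondary point: your citations of Lieberman are systematically the Neumann-compatible ones---[Lemma 13.21 and Theorem 13.19] for existence, [Lemma 13.21] for boundary H\"older regularity of $Du$, and the oblique-derivative Schauder theorem [Theorem 4.31]---whereas for the Dirichlet problem the paper invokes [Theorem 12.16], [Theorem 12.5 and Theorem 12.3], and [Theorem 4.28] respectively. This looks like a copy-over from the Neumann proof of Theorem~\ref{thm:existneumann} and should be corrected, although it does not affect the logic.
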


\begin{proof}
There are no substantial differences with the previous case:  In the mollification process to produce smooth initial data $u_{0,n}$ we extend the initial data to be odd rather than even on the sets $V_{i}\times(-r,r)$;  \cite[Theorem 12.16]{lieb:book} is used to produce solutions for each of the initial data $u_{0,n}$, and Lipschitz estimates for positive times follow from Theorem \ref{thm:Dirichletconvex}.  For positive times uniform bounds on the H\"older continuity of first spatial derivatives follow from \cite[Theorem 12.5 and Theorem 12.3]{lieb:book}, and H\"older continuity of second spatial derivatives from \cite[Theorem 4.28]{lieb:book}. This guarantees the existence of a regular limit $u$ which satisfies equation \eqref{eq:Dirichlet} on $\bar\Omega\times(0,\infty)$.  The proof that $u(.,t)$ converges uniformly to $u_{0}$ as $t\to 0$ is identical to that in the previous results, and the same barriers are supersolutions since the definition of the modulus of continuity $\omega$ implies that the supersolutions constructed are non-negative on $\partial\Omega$.
\end{proof}

\begin{theorem}\label{thm:existDirichlet2}
Let $\Omega$ be a bounded domain with $C^{2+\alpha}$ boundary of non-negative mean curvature.  Let $\alpha: [0,\infty)\to(0,\infty)$ and $\beta: [0,\infty)\to(0,\infty)$ be locally Lipschitz functions with $\alpha(0)=\beta(0)$, and
suppose that $\psi$ and $\varphi$ are as in Theorem \ref{Theorem 4} with $|\varphi'|$ bounded for $t>0$.  Then for any
$u_{0}\in C^{0}(\bar\Omega)$ with $u=0$ on $\partial\Omega$ there exists a unique regular solution $u$ of \eqref{dirichlet problem} on $\bar\Omega\times(0,\infty)$ which is continuous on $\bar\Omega\times[0,\infty)$ with $u(x,0)=u_{0}(x)$ for all $x\in\bar\Omega$.
\end{theorem}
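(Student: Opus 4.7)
The plan is to follow the scheme of Theorem \ref{thm:existDirichlet1} closely, with Theorem \ref{Theorem 4} replacing Theorem \ref{thm:Dirichletconvex} to provide uniform gradient bounds on a mean-convex (rather than convex) domain. First I would construct smooth approximations $u_{0,n}$ of the initial data via mollification with odd reflection across $\partial\Omega$ in boundary charts, arranging that $u_{0,n}\to u_{0}$ uniformly on $\bar\Omega$, each $u_{0,n}$ is smooth and vanishes on $\partial\Omega$, and all the $u_{0,n}$ share a common concave modulus of continuity $\omega\leq\psi$ with $\omega(z)\to 0$ as $z\to 0$. The hypothesis $\alpha(0)=\beta(0)$ guarantees that the coefficient matrix extends continuously through the origin, so that for each $n$ Theorem 12.16 of \cite{lieb:book} produces a regular solution $u_{n}$ of \eqref{dirichlet problem} with initial data $u_{0,n}$.

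Next I would apply Theorem \ref{Theorem 4} to each $u_{n}$, obtaining $\varphi(.,t)$ as a uniform modulus of continuity for $u_{n}(.,t)$. Since $|\varphi'|$ is assumed bounded for $t>0$, this furnishes a uniform Lipschitz bound on $u_{n}$ on each strip $\bar\Omega\times[t_{0},T]$, and hence uniform parabolicity of the equation there. Interior and boundary H\"older estimates on $Du_{n}$ (Theorems 12.3 and 12.5 of \cite{lieb:book}) followed by the Schauder estimate of Theorem 4.28 of \cite{lieb:book} yield $C^{2,\alpha}$ bounds on $u_{n}$ independent of $n$ on compact subsets of $\bar\Omega\times(0,\infty)$. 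A subsequence therefore converges in $C^{2,\beta}_{\tx{loc}}(\bar\Omega\times(0,\infty))$ to a regular limit $u$ solving \eqref{dirichlet problem}, with $u=0$ on $\partial\Omega$.

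To show $u(.,t)\to u_{0}$ uniformly as $t\to 0^{+}$, I would use the radial supersolutions $w_{a}$ defined in \eqref{eq:supersoln}, which are interior supersolutions of the PDE independently of the geometry of $\Omega$. At $t=0$, domination $w_{a}\geq u_{0,n}$ is built in from the modulus of continuity $\omega$, and on $\partial\Omega\times(0,T)$ we have $u_{n}=0$ while $w_{a}\geq 0$ once the parameter $a$ is chosen in terms of $\omega(d(x_{0}))$ at the centre $x_{0}$. A matching subsolution gives the lower bound. Comparison then yields an estimate of the form \eqref{eq:timecont} uniformly in $n$, and the triangle-inequality argument of Theorem \ref{thm:existperiodic} gives $u(.,t)\to u_{0}$ uniformly on $\bar\Omega$. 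Uniqueness follows from the standard comparison principle, which applies once $|Du|$ is bounded on each $\bar\Omega\times[t_{0},T]$.

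The main obstacle is arranging the boundary barriers to dominate $u_{n}$ at $\partial\Omega$ without the aid of convexity. As already observed in the proof of Theorem \ref{thm:existDirichlet1}, the supersolution property of $w_{a}$ is purely an interior condition on the PDE, and non-negativity on $\partial\Omega$ is guaranteed by the compatibility of $a$, the modulus of continuity $\omega$, and the vanishing of $u_{0,n}$ on $\partial\Omega$. Thus the role of mean-convexity enters the proof only through Theorem \ref{Theorem 4} itself, which supplies the Lipschitz bound up to the boundary; the regularity theory and initial-time continuity argument then proceed exactly as in the convex case.
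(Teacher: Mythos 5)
Your proposal is correct and follows essentially the same route as the paper: the paper proves this result by declaring it identical to the convex-domain Dirichlet case (mollification with odd reflection, Lieberman's Theorem 12.16 for the approximating solutions, H\"older and Schauder estimates for compactness, and the barriers \eqref{eq:supersoln} for continuity at $t=0$), with the sole change that the uniform Lipschitz bound comes from Theorem \ref{Theorem 4} rather than Theorem \ref{thm:Dirichletconvex}, and that the hypothesis $\alpha(0)=\beta(0)$ makes the coefficients locally Lipschitz in $Du$. Your observation that mean-convexity enters only through Theorem \ref{Theorem 4}, while the boundary barriers need only $w_a\geq 0$ on $\partial\Omega$, is exactly the point the paper relies on.
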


The proof of this result is identical to that for Theorem \ref{thm:existDirichlet1}, except that the Lipschitz estimates use Theorem \ref{Theorem 4}.  The assumption on $\alpha$ and $\beta$ guarantees that the coefficients of Equation \eqref{dirichlet problem} are locally Lipschitz functions of $Du$.

\begin{theorem}\label{thm:existDirichlet3}
Let $m$ and $F$ be as described in Equation \eqref{eq:AMCFcoeff}.  Let $\Omega$ be a bounded domain with $C^{2+\alpha}$ boundary which has non-negative anisotropic mean curvature as defined by $F$.  Then for any initial data $u_{0}\in C^{0}(\bar\Omega)$ with $u=0$ on $\partial\Omega$ there exists a unique smooth solution $u$ of \eqref{eq:AMCF} and \eqref{eq:AMCFcoeff} with Dirichlet boundary condition on $\bar\Omega\times(0,\infty)$ which is continuous on $\bar\Omega\times[0,\infty)$ and satisfies $u(x,0)=u_{0}(x)$ for all $x\in\bar\Omega$.
\end{theorem}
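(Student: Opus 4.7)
The plan is to follow the approximation scheme of Theorems \ref{thm:existDirichlet1} and \ref{thm:existDirichlet2}, using the anisotropic boundary gradient estimate of Theorem \ref{thm:AMCFDirichlet} in place of the isotropic interior estimates used there. The hypothesis that $\partial\Omega$ has non-negative anisotropic mean curvature is precisely the condition required for that estimate to apply, and the homogeneity/smoothness of $\bar F$ and $\bar m$ away from the origin guarantee that $a^{ij}(\cdot,t)$ is smooth in $p\in\RR^n$, so the equation is quasilinear parabolic with smooth coefficients (degenerating only in the sense controlled by Lemma \ref{lem:amcf}).

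First I would approximate $u_0$ by a sequence of smooth functions $u_{0,n}\in C^\infty(\bar\Omega)$ vanishing on $\partial\Omega$ and converging uniformly to $u_0$. As in Theorem \ref{thm:existDirichlet1} this is done by extending $u_0$ to be \emph{odd} across $\partial\Omega$ in a tubular neighborhood parametrized by the inward Euclidean normal, mollifying, and then cutting off; the resulting $u_{0,n}$ then inherit a common modulus of continuity $\omega$ from $u_0$, vanish on $\partial\Omega$, and satisfy compatibility conditions sufficient to invoke Theorem 12.16 of \cite{lieb:book} to produce, for each $n$, a regular solution $u_n$ of the anisotropic mean curvature flow with Dirichlet data $u_n=0$ on $\partial\Omega$ and initial data $u_{0,n}$.

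Next, Theorem \ref{thm:AMCFDirichlet} applied to each $u_n$ yields a gradient bound $1+|Du_n(x,t)|^2\leq C\exp(M^2/(At))$ on $\bar\Omega\times[t_0,T]$ which is independent of $n$, where $M=\sup_n\osc u_{0,n}\leq \osc u_0$. On these sets the equation is therefore uniformly parabolic with smooth coefficients, so the Hölder estimates of Theorems 12.3 and 12.5 of \cite{lieb:book} give uniform control on $[Du_n]_\alpha$ (up to the boundary), and the Schauder estimate of Theorem 4.28 of \cite{lieb:book} gives uniform $C^{2,\alpha}$ bounds on compact subsets of $\bar\Omega\times(0,\infty)$. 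By Arzel\`a--Ascoli a subsequence converges in $C^{2,\alpha}_{\mathrm{loc}}(\bar\Omega\times(0,\infty))$ to a regular solution $u$ satisfying \eqref{eq:AMCF}--\eqref{eq:AMCFcoeff} with zero Dirichlet data.

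It remains to show $u(\cdot,t)\to u_0$ uniformly as $t\to 0^+$, and then uniqueness; these are where the main work goes. Following the pattern of \eqref{eq:supersoln}, I would build supersolutions of the form $w_a(x,t)=u_{0,n}(x_0)+a+\mu t+v(a)\sqrt{\Lambda t}\,\phi(F^*(x-x_0)/\sqrt{\Lambda t})$, replacing Euclidean distance by the anisotropic dual distance $F^*$ that already appears in Theorem \ref{thm:AMCFDirichlet}; the radial profile $\phi$ is to be the self-similar solution of the associated one-dimensional reduction of \eqref{eq:AMCF}, which exists and has bounded derivative because the coefficients are uniformly elliptic on the bounded range of gradients realized by the barrier. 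The constants $\Lambda,\mu$ depend only on upper bounds for $a^{ij}$ on $\{|p|\leq v(a)\}$, which are finite by smoothness of the coefficients. The convexity-type hypothesis needed to ensure that $w_a\geq 0$ on $\partial\Omega$ follows as in \cite{thm:existDirichlet1} from the definition of $\omega$. The remaining obstacle---and the genuinely new point---is verifying that these anisotropic radial barriers are actual supersolutions when $F^*$ replaces Euclidean distance; this reduces to checking a one-dimensional ODE inequality that uses $F^*(Dd)=1$ and the homogeneity identities of $\bar F$ established in the proof of Lemma \ref{lem:amcf}. Once this is settled, equicontinuity in $t$ uniformly in $n$ follows as in Theorem \ref{thm:existperiodic}, giving uniform convergence to $u_0$; uniqueness is a direct consequence of Theorem \ref{thm:AMCFDirichlet} applied to the difference of two solutions with the same data.
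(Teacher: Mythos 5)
Your overall scheme is exactly the paper's: mollify with an odd extension across $\partial\Omega$, solve via \cite[Theorem 12.16]{lieb:book}, get $n$-independent Lipschitz bounds from Theorem \ref{thm:AMCFDirichlet} (this is the only place the non-negative anisotropic mean curvature is used), upgrade by the H\"older and Schauder estimates of \cite{lieb:book}, pass to a limit, and recover the initial data by barriers as in Theorem \ref{thm:existperiodic}. The paper's proof is literally this one sentence of differences, so on the structural level you have reproduced it.

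Two points, though. First, the step you single out as ``the genuinely new point'' --- building barriers with the anisotropic distance $F^{*}(x-x_{0})$ and then having to verify a one-dimensional supersolution inequality for them --- is an unnecessary detour, and as written it is the one step you leave unproved. The barrier \eqref{eq:supersoln} from Theorem \ref{thm:existperiodic}, built on the \emph{Euclidean} radius, is already a supersolution of \eqref{eq:AMCF}: its Hessian is positive semi-definite (the profile $\phi$ is convex and increasing), so $a^{ij}D_{i}D_{j}w_{a}\leq \Lambda_{K}\Delta w_{a}$ whenever $a^{ij}(p,t)\xi_{i}\xi_{j}\leq\Lambda_{K}|\xi|^{2}$ on the relevant gradient range, and that upper bound holds here because $a^{ij}(p)=m(p)F(p)D^{2}F(p)$ is smooth in $p$ on all of $\RR^{n}$. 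Nothing anisotropic is needed at this stage; the only place the structure of $F$ enters is Theorem \ref{thm:AMCFDirichlet} and Lemma \ref{lem:amcf}. So you should either drop the $F^{*}$ barriers or actually carry out the verification; leaving it as an ``obstacle to be settled'' leaves a hole in a step that need not exist. Second, your uniqueness argument is not correct as stated: the difference of two solutions does not solve \eqref{eq:AMCF}, so Theorem \ref{thm:AMCFDirichlet} cannot be applied to it. Uniqueness should instead come from the comparison (maximum) principle for two regular solutions on $\bar\Omega\times(0,\infty)$ which are continuous down to $t=0$ with the same initial and boundary values --- the same ``trivial'' argument invoked in Theorem \ref{thm:existperiodic}.
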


Again, the only difference is that the Lipschitz estimates use Theorem \ref{thm:AMCFDirichlet}.

\subsection{Entire initial value problems}

The entire initial value problem can be approached by taking a limit of periodic or Neumann problems.  In the case of initial data of bounded oscillation, we can produce solutions for a very wide class of equations, of similar generality to Theorem \ref{thm:existperiodic}.  We also extend this to a natural class of initial data of linear growth.   In this setting the question of uniqueness is more subtle and we do not address it.

\begin{theorem}\label{thm:entireexist}
Let $u_{0}$ be a continuous function on $\RR^{n}$ with modulus of continuity $\psi(z)$ which is positive and concave on $(0,\infty)$.  Let $a^{ij}$ be locally Lipschitz, and let $b$ be locally H\"older continuous.
Suppose there exists $\varphi$ satisfying \eqref{conditions and equation for varphi} with $\psi$ as above and $\alpha$ defined according to \eqref{defn of alpha}, and with $|\varphi'|$ bounded for $t>0$.
Then there exists a regular solution $u$ to Equation \eqref{evolution equation} on $\RR^n\times(0,\infty)$ which is continuous on $\RR^n\times[0,\infty)$ (with modulus of continuity $\varphi(.,t)$ for each $t\geq 0$) and satisfies $u(x,0)=u_{0}(x)$ for all $x\in\RR^{n}$.
\end{theorem}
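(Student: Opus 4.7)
The plan is to construct $u$ as a locally uniform limit of $\Gamma_k$-periodic approximating solutions supplied by Theorem \ref{thm:existperiodic}. For each integer $k\geq 1$, reflect $u_0|_{[-k,k]^n}$ successively across the faces of the cube $[-k,k]^n$ to obtain a continuous $\Gamma_k$-periodic extension $u_0^{(k)}$ on $\RR^n$, where $\Gamma_k = 2k\ZZ^n$. Since Euclidean reflections are isometries, $u_0^{(k)}$ inherits $\psi$ as a modulus of continuity, and $u_0^{(k)}\equiv u_0$ on $[-k,k]^n$. Theorem \ref{thm:existperiodic} then produces a $\Gamma_k$-periodic regular solution $u_k$ of \eqref{evolution equation}, continuous on $\RR^n\times[0,\infty)$, with $u_k(\cdot,0)=u_0^{(k)}$, and Theorem \ref{periodic higherdimensional theorem} supplies $\varphi(\cdot,t)$ as a modulus of continuity for $u_k(\cdot,t)$, uniformly in $k$. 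The hypothesis that $|\varphi'|$ is bounded for $t>0$ thus yields a uniform spatial Lipschitz bound for $u_k$ on $\RR^n\times[t_0,\infty)$ for each $t_0>0$.

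Next, I would establish a pointwise bound. Fix a base point $x_0$; for $k$ sufficiently large, $u_0^{(k)}(x_0)=u_0(x_0)$. The supersolution $w_a$ of \eqref{eq:supersoln}, built around $x_0$, depends only on the common local modulus of continuity $\omega$ of the initial data and on local bounds for $a^{ij}$ and $b$, so it serves as a barrier for every $u_k$ simultaneously. This yields the short-time control \eqref{eq:timecont} on $|u_k(x_0,t)-u_0(x_0)|$ uniformly in $k$. Iterating (applying the same estimate to the evolved data $u_k(\cdot,s)$, which has modulus of continuity $\varphi(\cdot,s)$ independent of $k$) extends the bound on $u_k(x_0,\cdot)$ to any compact time interval. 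Combined with the modulus of continuity $\varphi(\cdot,t)$ this produces a uniform $L^\infty$ bound for $u_k$ on every compact subset of $\RR^n\times[0,\infty)$.

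With uniform local $L^\infty$ and Lipschitz bounds in hand for $t\geq t_0>0$, the H\"older and Schauder interior estimates from \cite{lieb:book} cited in the proof of Theorem \ref{thm:existperiodic} produce uniform local $C^{2,\beta}$ bounds, independent of $k$. A diagonal extraction then yields a subsequence of $\{u_k\}$ converging in $C^{2,\beta}_{\tx{loc}}(\RR^n\times(0,\infty))$ to a regular solution $u$ of \eqref{evolution equation}, which inherits $\varphi(\cdot,t)$ as modulus of continuity at each positive time. Continuity of $u$ up to $t=0$ with $u(\cdot,0)=u_0$ follows by combining the uniform bound \eqref{eq:timecont} (applied to each $u_k$) with the local uniform convergence $u_0^{(k)}\to u_0$, exactly as at the end of the proof of Theorem \ref{thm:existperiodic}.

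The main obstacle is the absence of any \emph{global} $L^\infty$ control on the approximating sequence: because $\psi$ is only assumed positive and concave on $(0,\infty)$ and may grow linearly, the reflected data $u_0^{(k)}$ have oscillation $\psi(k\sqrt n)\to\infty$, so standard global maximum-principle arguments give no useful bound on $u_k$. The key observation is that the barrier construction of Theorem \ref{thm:existperiodic} is purely local in character, depending only on the local continuity of the initial data at $x_0$ and on the structure of the equation; together with the propagated modulus of continuity $\varphi(\cdot,t)$, it produces exactly the local $L^\infty$ estimates required for the compactness argument, with no global bound ever being needed.
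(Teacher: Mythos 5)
Your construction uses periodic approximants obtained by reflecting $u_0|_{[-k,k]^n}$ across the cube faces, whereas the paper takes a limit of Neumann problems on expanding balls $B_R(0)$ with initial data given simply by the restriction of $u_0$. The reflection does preserve the modulus of continuity (the folding map $\RR^n\to[-k,k]^n$ is $1$-Lipschitz in each coordinate), but note that reflecting across each face produces period $4k$ in each coordinate, so $\Gamma_k$ should be $4k\ZZ^n$, not $2k\ZZ^n$. Both families propagate $\varphi(\cdot,t)$ as a modulus of continuity (Theorem \ref{periodic higherdimensional theorem} for yours, Theorem \ref{thm:Neumann} for the paper's), and the interior Lipschitz\,$\Rightarrow$\,H\"older\,$\Rightarrow$\,Schauder compactness argument is identical. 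The Neumann route is marginally cleaner in that the approximating initial data is literally $u_0$ on $B_R$, with no tail modification, whereas your reflected data differ from $u_0$ outside $[-k,k]^n$ and have oscillation growing with $k$; both routes therefore rely on the same local barrier idea to get uniform local $L^\infty$ control.

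The step you treat too lightly is the claim that $w_a$ of \eqref{eq:supersoln} depends ``only on the local continuity of the initial data at $x_0$.'' The comparison principle requires $w_a(\cdot,0)\geq u_0^{(k)}$ on an \emph{entire} period, not merely near $x_0$, so the barrier has both a near-field and a far-field constraint. Moreover, the linear envelope $|u_0(y)-u_0(x)|\le\varepsilon+v(\varepsilon)|y-x|$ used in the proof of Theorem \ref{thm:existperiodic} is obtained there from \emph{uniform} continuity, which is unavailable here since $\psi$ is only assumed positive (so $\psi(0^+)>0$ is allowed and $u_0$ need not be uniformly continuous). The paper resolves this with an explicit near-field/far-field split: local uniform continuity on the compact set $\bar B_{R_0}(0)$ gives $\delta(R_0,\varepsilon)$ with $|u_0(y)-u_0(x)|<\varepsilon$ for $x\in\bar B_{R_0}(0)$, $|y-x|<\delta$, while concavity of $\psi$ gives $|u_0(y)-u_0(x)|\le v_{R_0}(\varepsilon)|y-x|$ with $v_{R_0}(\varepsilon)=\psi(\delta)/\delta$ in the far field; these together furnish the envelope for all $y$. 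Once you spell this out, your proposal is sound; you also do not need the iteration in time, since a single application of the supersolution already bounds $u_k(x_0,t)-u_0(x_0)$ by $a+\mu_{v(a)}t+v(a)\sqrt{\Lambda_{v(a)}t}\,\phi(0)$ on the time interval where $\mu$ and $\Lambda$ are defined.
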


In particular, the conditions are satisfied with $\psi(z)=C_1+C_2z$ for any $C_1,C_2\geq 0$ if $\alpha(p)\geq A|p|^{-2}$ for $p$ sufficiently large.  If $\alpha(p)\geq A|p|^{-\gamma}$ for $p$ large with $\gamma>2$ then the result holds with $\psi= C_1z^\beta+C_2z$ for any $C_1>0$ and $C_2\geq 0$, with $\frac{\gamma-2}{\gamma}<\beta\leq 1$ (compare \cite[Example 6.2]{ac:gradest1D}).

\begin{proof}
We construct a solution by taking a limit of solutions $u_{R}$ to the Neumann problem on $B_{R}(0)\times[0,\infty)$ with initial data given by the restriction of $u_{0}$ to $B_{R}(0)$.  Theorem \ref{thm:existneumann} gives the existence of these, and Theorem \ref{thm:Neumann} gives a Global Lipschitz bound on $u_R$, independent of $u_R$.  It follows that the equation is uniformly parabolic on $B_R\times[t_0,\infty)$ for each $t_0$, so bounds for H\"older norms of first spatial derivatives hold on $B_{R/2}\times[2t_0,\infty)$, independent of $R$ \cite[Theorem 12.3]{lieb:book}.  Second spatial derivatives then also satisfy H\"older estimates  independent of $R$, on $B_{R/4}\times[4t_0,\infty)$ \cite[Theorem 4.31]{lieb:book}.  It follows that a subsequence converges (with local uniform convergence of spatial derivatives to second order and first time derivative) to a regular solution $u$ of \eqref{evolution equation} on $\RR^n\times(0,\infty)$.

We prove that $u(.,t)$ converges locally uniformly to $u_0$ as $R\to\infty$.

First, If $u_0$ is uniformly continuous (in particular, in any case where $\lim_{z\to 0}\psi(z)=0$) then the argument is exactly as in the proof of Theorem \ref{thm:existneumann}, and we deduce global uniform convergence of $u(.,t)$ to $u_0$.  
 
 In the general case the argument is a slight modification of that used previously. 
 For each $R>0$ and $\varepsilon>0$ there exists $\delta(R,\varepsilon)>0$ (non-decreasing in $\varepsilon$) such that $|u(y)-u(x)|<\varepsilon$ for all $x\in \bar B_R(0)$ and $y\in\RR^n$ with $|y-x|<\delta(R,\varepsilon)$.  Define $v_R(\varepsilon)=\frac{\psi(\delta(\varepsilon))}{\delta(\varepsilon)}$.  Then by concavity of $\psi$, $v_R(\varepsilon)$ is non-increasing in $\varepsilon$, and we have $|u(y)-u(x)|\leq \varepsilon+v_R(\varepsilon)|y-x|$ for all $x\in B_R(0)$ and $y\in\RR^n$.  For fixed $R_0$, the supersolutions and subsolutions constructed according to \eqref{eq:supersoln} therefore give bounds on $\sup_{x\in B_{R_0}(0)}|u_R(x,t)-u_R(x,0)|$ independent of $R$ for $R$ large, establishing that $u_R(.,t)$ converges uniformly to $u_0$ on $B_{R_0}(0)$ as $R\to\infty$, for any $R_0$.

\end{proof}

\bibliographystyle{amsplain}

\begin{thebibliography}{1}

\bibitem{andrews:anisotropic}
Ben Andrews, \emph{Volume-preserving anisotropic mean curvature flow}, Indiana
  Univ. Math. J. \textbf{50} (2001), no.~2, 783--827. \MR{2002m:53105}

\bibitem{ac:gradest1D}
Ben Andrews and Julie Clutterbuck, \emph{Interior gradient estimates for
  parabolic equations in one space dimension}, Preprint (2008).

\bibitem{ang:mcf}
Sigurd Angenent, \emph{Some recent results on mean curvature flow}, in ``Recent advances in
partial differential equations (El Escorial, 1992)'', Edited by M. A. Herrero and E. Zuazua, RAM Res. Appl. Math. {\bf 30} (1994), 1--18.

\bibitem{anggur1}
Sigurd Angenent and Morton~E. Gurtin, \emph{Multiphase thermomechanics with
  interfacial structure. ii: Evolution of an isothermal interface}, Arch.
  Ration. Mech. Anal. \textbf{108} (1989), no.~4, 323--391.

\bibitem{anggur2}
\bysame, \emph{Anisotropic motion of a phase interface. {W}ell-posedness of the
  initial value problem and qualitative properties of the interface}, J. Reine
  Angew. Math. \textbf{446} (1994), 1--47. \MR{95c:35273}



\bibitem{chendiben1} Y. Z. Chen and E. DiBenedetto, \emph{On the local behaviour of solutions of singular parabolic equations}, Arch. Rational Mech. Anal. {\bf 103} (1988), 241--251.

\bibitem{chendiben2} Y. Z. Chen and E. DiBenedetto, \emph{H\"older estimates of solutions of singular parabolic equations with measureable coefficients}, Arch. Rational Mech. Anal. {\bf 118} (1992), 257--271.

\bibitem{julie:anisotropic}
Julie Clutterbuck, \emph{Interior gradient estimates for anisotropic mean
  curvature flow}, Pacific J. Math. {\bf 229} (2007), 119--136.
  
\bibitem{cm:gradest}
Tobias~H. Colding and William~P. Minicozzi, II, \emph{{Sharp estimates for mean
  curvature flow of graphs.}}, J. Reine Angew. Math. \textbf{574} (2004),
  187--195.

\bibitem{dibenedetto} E. DiBenedetto, \emph{On the local behaviour of solutions of certain degenerate parabolic equations with measureable coefficients}, Ann. Scuola Norm. Sup. Pisa {\bf 13} (1986), 487--535.

\bibitem{eh:interior}
Klaus Ecker and Gerhard Huisken, \emph{Interior estimates for hypersurfaces
  moving by mean curvature}, Invent. Math. \textbf{105} (1991), no.~3,
  547--569. \MR{92i:53010}

\bibitem{es:gradest}
L.C. Evans and J.~Spruck, \emph{{Motion of level sets by mean curvature.
  III.}}, J. Geom. Anal. \textbf{2} (1992), no.~2, 121--150 (English).

\bibitem{trudinger}
David Gilbarg and Neil~S. Trudinger, \emph{Elliptic partial differential
  equations of second order}, Classics in Mathematics, Springer-Verlag, Berlin,
  2001, Reprint of the 1998 edition. \MR{2001k:35004}

\bibitem{hu:boundary}
Gerhard Huisken, \emph{Nonparametric mean curvature evolution with boundary
  conditions}, J. Differential Equations \textbf{77} (1989), no.~2, 369--378.
  \MR{90g:35050}

\bibitem{hu:local}
\bysame, \emph{Local and global behaviour of hypersurfaces moving by mean
  curvature}, Differential geometry: partial differential equations on
  manifolds, Proc. Sympos. Pure Math., vol.~54, Amer. Math. Soc., Providence,
  RI, 1993, pp.~175--191. \MR{94c:58037}

\bibitem{gh:dist}
\bysame, \emph{A distance comparison principle for evolving curves}, Asian J.
  Math. \textbf{2} (1998), no.~1, 127--133. \MR{99m:58052}

\bibitem{kruzhkov:quasilinear}
S.~N. Kru{\v{z}}kov, \emph{Quasilinear parabolic equations and systems with two
  independent variables}, Trudy Sem. Petrovsk. (1979), no.~5, 217--272.
  \MR{81a:35046}

\bibitem{li:first-bvp}
Gary~M. Lieberman, \emph{The first initial-boundary value problem for
  quasilinear second order parabolic equations}, Ann. Scuola Norm. Sup. Pisa
  Cl. Sci. (4) \textbf{13} (1986), no.~3, 347--387. \MR{88e:35108}

\bibitem{lieb:book} \bysame, ``Second order parabolic differential equations'', World Scientific, Singapore, 1996.

\bibitem{MatEid} M. I. Mati{\u\i}{\v{c}}uk and S. D. {\`E}{\u\i}del'man,  \emph{On parabolic systems with coefficients satisfying {D}ini's condition},  Dokl. Akad. Nauk SSSR {\bf 165} (1965), 482--485.

\bibitem{NagaseTonegawa}
Yuko Nagase and Yoshihiro Tonegawa, \emph{Interior gradient estimate for 1-D
  anisotropic curvature flow},  Bol. Soc. Parana. Mat. {\bf 23} (2005), 93--98.

\bibitem{cht}
J.~E. {Taylor}, J.~W. {Cahn}, and C.~A. {Handwerker}, \emph{{Geometric models
  of crystal growth.}}, Acta Metallurgica et Materialia \textbf{40} (1992),
  1443--1474.


\end{thebibliography}

\end{document}